\newtheorem{theorem}{Theorem}[section]
\newtheorem{corollary}[theorem]{Corollary}
\newtheorem{example}[theorem]{Example}
\newtheorem{lemma}[theorem]{Lemma}
\newtheorem{proposition}[theorem]{Proposition}
\newtheorem{remark}[theorem]{Remark}
\newenvironment{proof}[1][Proof]{\noindent\textbf{#1.} }{\ \rule{0.5em}{0.5em}}
\begin{document}

\title{Representation of strongly truncated Riesz spaces}
\author{Karim Boulabiar\thanks{Corresponding author:
\texttt{karim.boulabiar@fst.utm.tn}}\quad and\quad Rawaa Hajji\medskip\\{\small Laboratoire de Recherche LATAO}\\{\small D\'{e}partement de Mathematiques, Facult\'{e} des Sciences de Tunis}\\{\small Universit\'{e} de Tunis El Manar, 2092, El Manar, Tunisia}}
\date{\textit{Dedicated to the memory of Professor Wim Luxemburg}}
\maketitle

\begin{abstract}
Following a recent idea by Ball, we introduce the notion of strongly truncated
Riesz space with a suitable spectrum. We prove that, under an extra
Archimedean type condition, any strongly truncated Riesz space is isomorphic
to a uniformly dense Riesz subspace of a $C_{0}\left(  X\right)  $-space. This
turns out to be a direct generalization of the classical Kakutani
Representation Theorem on Archimedean Riesz spaces with strong unit. Another
representation theorem on normed Riesz spaces, due to Fremlin, will be
obtained as a consequence of our main result.

\end{abstract}

\noindent{\small \textbf{Mathematics Subject Classification.} 46A40; 46B40;
47B65}

\noindent{\small \textbf{Keywords}. Infinitely small; spectrum; truncation;
truncated Riesz space; strong truncation; representation; Riesz norm; locally
compact; Stone condition.}

\section{Introduction}

One of major contributions by Stone to Measure Theory is a representation
pre-intergrals $I\left(  f\right)  $ as intergrals $\int fd\mu$ for some
measure $\mu$ (see \cite{S48}). He found in particular that the natural
framework to get such a representation is the function Riesz spaces $E$ closed
under \textsl{truncation} with the constant function $1$, i.e.,%
\[
1\wedge f\in E\text{, for all }f\in E.
\]
Since then, such spaces, called by now \textsl{Stone vector lattices}, were
found to be fundamental to analysis, and their importance has steadily grown
\cite{AC07,D04,HP84,K97}. For instance, Lebesgue integration extends quite
smoothly to any Stone vector lattice, and function Riesz spaces lacking the
above \textsl{Stone condition} may have pre-integrals which cannot be
represented by any measure (see \cite{JF87}). In \cite{F74,F06}, Fremlin calls
Stone vector lattices of functions \textsl{truncated Riesz spaces}. In the
present paper, we shall rather adopt the Fremlin terminology. That's what Ball
did in his papers \cite{B14,B14-0} when he gave an appropriate axiomatization
of this concept. Rephrasing his first Axiom, we call a \textsl{truncation} on
an arbitrary Riesz space $E$ any unary operation $\ast$ on $E^{+}$ such that%
\[
f^{\ast}\wedge g=f\wedge g^{\ast}\text{, for all }f,g\in E^{+}.
\]
A Riesz space (also called a vector lattice) $E$ along with a truncation is
called a \textsl{truncated Riesz space}. A positive element $f$ in a truncated
Riesz space $E$ is said to be $^{\ast}$-\textsl{infinitely small} if%
\[
\left(  \varepsilon f\right)  ^{\ast}=\varepsilon f\text{, for all
}\varepsilon\in\left(  0,\infty\right)  .
\]
Furthermore, if $f\in E^{+}$ and $f^{\ast}=0$ imply $f=0$, we call $E$ a
\textsl{weakly truncated Riesz space}. Using a different terminology, Ball
proved that for any weakly truncated Riesz space $E$ with no nonzero $^{\ast}%
$-infinitely small elements, there exists a compact Hausdorff space $K$ such
that $E$ can be represented as a truncated Riesz space of continuous real
extended functions on $K$ (see \cite{L79} for continuous real extended
functions). Actually, this is a direct generalization of the classical Yosida
Representation Theorem for Archimedean Riesz spaces with weak unit
\cite{LZ71}. The Ball's result prompted us to investigate the extent to which
the classical Kakutani Representation Theorem for Archimedean Riesz spaces
with strong unit can be generalized to a wider class of truncated Riesz
spaces. Let's figure out how the problem has been addressed.

A truncation $\ast$ on a Riesz space $E$ is said to be \textsl{strong }if, for
every $f\in E^{+}$, the equality $\left(  \varepsilon f\right)  ^{\ast
}=\varepsilon f$ holds for some $\varepsilon\in\left(  0,\infty\right)  $. A
Riesz space $E$ with a strong truncation is called a \textsl{strongly
truncated Riesz space}. The set of all real-valued Riesz homomorphisms $u$ on
the truncated Riesz space $E$ such that%
\[
u\left(  f^{\ast}\right)  =\min\left\{  1,u\left(  f\right)  \right\}  \text{,
for all }f\in E^{+}%
\]
is denoted by $\eta E$ and called the \textsl{spectrum} of $E$. We show that
$\eta E$, under its topology inherited from the product topology on
$\mathbb{R}^{E}$, is a locally compact Hausdorff space. Moreover, if $E$ is a
strongly truncated Riesz with no nonzero $^{\ast}$-infinitely small elements,
then $\eta E$ is large enough to separate the points of $E$ and allow
representation by continuous functions. Relying on a lattice version of the
Stone-Weierstrass theorem for locally compact Hausdorff spaces (which could
not be found explicitly in the literature), we prove that any strongly
truncated Riesz space $E$ with no nonzero $^{\ast}$-infinitely small elements
is Riesz isomorphic to a uniformly dense truncated Riesz subspace of
$C_{0}\left(  \eta E\right)  $. Here, $C_{0}\left(  \eta E\right)  $ denotes
the Banach lattice of all continuous real-valued functions on $\eta E$
vanishing at infinity. We obtain the classical Kakutani Representation Theorem
as a special case of our result. Indeed, if $E$ is an Archimedean Riesz space
with a strong unit $e>0$, then $E$ is a strongly truncated Riesz space with no
nonzero $^{\ast}$-infinitely elements under the truncation given by%
\[
f^{\ast}=e\wedge f\text{, for all }f\in E^{+}.
\]
We show that, in this situation, $\eta E$ is a compact Hausdorff space and so
$C_{0}\left(  \eta E\right)  $ coincides with the Banach lattice $C\left(
\eta E\right)  $ of all continuous real-valued functions on $\eta E$. In
summary, $E$ has a uniformly dense copy in $C\left(  \eta E\right)  $. Our
central result will also turn out to be a considerable generalization of a
less known representation theorem due to Fremlin (see \cite[83L (d)]{F74}).
Some details seem in order.

Let $E$ be a Riesz space with a Fatou $M$-norm $\left\Vert .\right\Vert $ and
assume that the supremum%
\[
\sup\left\{  g\in E^{+}:g\leq f\text{ and }\left\Vert g\right\Vert \leq
r\right\}
\]
exists in $E$ for every $f\in E^{+}$ and $r\in\left(  0,\infty\right)  $.
Fremlin proved that $E$ is isomorphic, as a normed Riesz space, to a truncated
Riesz subspace of the Riesz space $\ell^{\infty}\left(  X\right)  $ of all
bounded real-valued functions on some nonvoid set $X$. As a matter of fact, we
shall use our main theorem to improve this result by showing that $E$ is
isomorphic, as a normed Riesz space, to a uniformly dense truncated Riesz
subspace of $C_{0}\left(  \eta E\right)  $.

Finally, we point out that in each section we summarize enough necessary
background material to keep this paper reasonably self contained. By the way,
the classical book \cite{LZ71} by Luxemburg and Zaanen is adopted as the
unique source of unexplained terminology and notation.

\section{Infinitely small elements with respect to a truncation}

Recall that a \textsl{truncation} on a Riesz space $E$ is a unary operation
$\ast$ on the positive cone $E^{+}$ of $E$ such that%
\[
f^{\ast}\wedge g=f\wedge g^{\ast}\text{, for all }f,g\in E^{+}.
\]
A Riesz space $E$ along with a truncation $\ast$ is called a \textsl{truncated
Riesz space.} The truncation, on any truncated Riesz space will be denoted by
$\ast$. The set of all fixed points of the truncation on a Riesz space $E$ is
denoted by $P_{\ast}\left(  E\right)  $, i.e.,%
\[
P_{\ast}\left(  E\right)  =\left\{  f\in E^{+}:f^{\ast}=f\right\}  .
\]
We gather some elementary properties in the next lemma. Some of them can be
found in \cite{BE17}. We give the detailed proofs for the sake of convenience.

\begin{lemma}
\label{elem}Let $E$ be a truncated Riesz space and $f,g\in E^{+}$. Then the
following hold.

\begin{enumerate}
\item[\emph{(i)}] $f^{\ast}\leq f$ and $f^{\ast}\in P_{\ast}\left(  E\right)
$.

\item[\emph{(ii)}] $f\leq g$ implies $f^{\ast}\leq g^{\ast}$.

\item[\emph{(iii)}] If $f\leq g$ and $g\in P_{\ast}\left(  E\right)  $, then
$f\in P_{\ast}\left(  E\right)  $.

\item[\emph{(iv)}] $\left(  f\wedge g\right)  ^{\ast}=f^{\ast}\wedge g^{\ast}$
and $\left(  f\vee g\right)  ^{\ast}=f^{\ast}\vee g^{\ast}$ \emph{(}In
particular, $P_{\ast}\left(  E\right)  $ is a lattice\emph{)}.

\item[\emph{(v)}] $\left\vert f^{\ast}-g^{\ast}\right\vert \leq\left\vert
f-g\right\vert ^{\ast}$.
\end{enumerate}
\end{lemma}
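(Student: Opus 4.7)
The plan is to prove the five assertions in the stated order, using only the defining identity $f^{\ast}\wedge g=f\wedge g^{\ast}$ together with the parts already established. For \emph{(i)}, I would specialize this identity to $g=f^{\ast}$, obtaining $f^{\ast}=f\wedge(f^{\ast})^{\ast}$, which simultaneously gives $f^{\ast}\leq f$ and $f^{\ast}\leq(f^{\ast})^{\ast}$; applying the inequality $f^{\ast}\leq f$ with $f^{\ast}$ in place of $f$ produces $(f^{\ast})^{\ast}\leq f^{\ast}$, hence $f^{\ast}\in P_{\ast}(E)$. For \emph{(ii)}, if $f\leq g$ then $f^{\ast}\leq f\leq g$ so $f^{\ast}\wedge g=f^{\ast}$, while the defining identity gives $f^{\ast}\wedge g=f\wedge g^{\ast}\leq g^{\ast}$. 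For \emph{(iii)}, the hypothesis $g=g^{\ast}$ together with $f\leq g$ yields $f\wedge g^{\ast}=f$, so the defining identity becomes $f^{\ast}\wedge g=f$, forcing $f\leq f^{\ast}$; equality follows from \emph{(i)}.

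For the meet part of \emph{(iv)}, I would note that $f^{\ast}\wedge g^{\ast}\leq f^{\ast}\in P_{\ast}(E)$, so \emph{(iii)} places $f^{\ast}\wedge g^{\ast}$ in $P_{\ast}(E)$; combined with \emph{(ii)} applied to $f^{\ast}\wedge g^{\ast}\leq f\wedge g$ this yields $f^{\ast}\wedge g^{\ast}\leq(f\wedge g)^{\ast}$, and the reverse inequality is direct from \emph{(ii)}. For the join, since $(f\vee g)^{\ast}\leq f\vee g$, Riesz distributivity gives
\[
(f\vee g)^{\ast}=(f\vee g)^{\ast}\wedge(f\vee g)=\bigl((f\vee g)^{\ast}\wedge f\bigr)\vee\bigl((f\vee g)^{\ast}\wedge g\bigr),
\]
and the defining identity together with $f\vee g\geq f\geq f^{\ast}$ gives $(f\vee g)^{\ast}\wedge f=(f\vee g)\wedge f^{\ast}=f^{\ast}$, and likewise for $g$.

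For \emph{(v)}, the idea is to write $(f^{\ast}-g^{\ast})^{+}=f^{\ast}\vee g^{\ast}-g^{\ast}=(f\vee g)^{\ast}-g^{\ast}$ using \emph{(iv)}, and then exploit the defining identity $(f\vee g)^{\ast}\wedge g=(f\vee g)\wedge g^{\ast}=g^{\ast}$ to compute
\[
(f\vee g)^{\ast}-g^{\ast}=\bigl((f\vee g)^{\ast}-g\bigr)^{+}\leq\bigl((f\vee g)-g\bigr)^{+}=(f-g)^{+}\leq|f-g|.
\]
Since $(f^{\ast}-g^{\ast})^{+}\leq f^{\ast}\in P_{\ast}(E)$, part \emph{(iii)} shows that $(f^{\ast}-g^{\ast})^{+}$ is itself a fixed point, and \emph{(ii)} then upgrades the displayed bound to $(f^{\ast}-g^{\ast})^{+}\leq|f-g|^{\ast}$; the symmetric estimate $(g^{\ast}-f^{\ast})^{+}\leq|f-g|^{\ast}$ and the disjointness identity $|a|=a^{+}\vee a^{-}$ finish the proof. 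The step I expect to require the most care is this last one in \emph{(v)}: replacing the bound $|f-g|$ by the stronger $|f-g|^{\ast}$ only works because $(f^{\ast}-g^{\ast})^{+}$ already lies in $P_{\ast}(E)$, and recognising this at the right moment is the crux of the argument.
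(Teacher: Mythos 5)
Your proof is correct. Parts (i)--(iii) and the join half of (iv) follow the paper's argument essentially verbatim; in the meet half of (iv) you substitute a small fixed-point argument (putting $f^{\ast}\wedge g^{\ast}$ in $P_{\ast}(E)$ via (iii) and then invoking (ii)) for the paper's direct computation $f^{\ast}\wedge g^{\ast}=f\wedge g\wedge g^{\ast}=(f\wedge g)^{\ast}\wedge g\leq(f\wedge g)^{\ast}$, which is an equally clean route. The genuine divergence is in (v): the paper majorizes $f^{\ast}$ and $g^{\ast}$ by $f+g$, applies Birkhoff's inequality $\left\vert a\wedge c-b\wedge c\right\vert\leq\left\vert a-b\right\vert$ with $c=f+g$, and then swaps the star across the wedge to get $\left\vert f^{\ast}-g^{\ast}\right\vert\leq\left\vert f-g\right\vert\wedge(f+g)^{\ast}=\left\vert f-g\right\vert^{\ast}\wedge(f+g)\leq\left\vert f-g\right\vert^{\ast}$ in one stroke. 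You instead split $\left\vert f^{\ast}-g^{\ast}\right\vert$ into positive and negative parts, identify $(f^{\ast}-g^{\ast})^{+}=(f\vee g)^{\ast}-g^{\ast}=((f\vee g)^{\ast}-g)^{+}$ using (iv) and the identity $a-a\wedge b=(a-b)^{+}$, bound this by $(f-g)^{+}\leq\left\vert f-g\right\vert$, and then -- correctly observing that $(f^{\ast}-g^{\ast})^{+}\leq f^{\ast}$ places it in $P_{\ast}(E)$ by (iii) -- upgrade the bound to $\left\vert f-g\right\vert^{\ast}$ via (ii), finishing with the symmetric estimate and $\left\vert a\right\vert=a^{+}\vee a^{-}$. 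Both arguments are complete; yours is self-contained within the lemma (no appeal to Birkhoff) at the cost of leaning on (iv) and a slightly longer chain of elementary identities, while the paper's is shorter but imports an external inequality.
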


\begin{proof}
$\mathrm{(i)}$ We have%
\[
f^{\ast}=f^{\ast}\wedge f^{\ast}=f\wedge\left(  f^{\ast}\right)  ^{\ast}\leq
f.
\]
In particular, $\left(  f^{\ast}\right)  ^{\ast}\leq f^{\ast}\leq f$ and so%
\[
\left(  f^{\ast}\right)  ^{\ast}=\left(  f^{\ast}\right)  ^{\ast}\wedge
f=f^{\ast}\wedge f^{\ast}=f^{\ast}.
\]
This shows $\mathrm{(i)}$.

$\mathrm{(ii)}$ Since $f^{\ast}\leq f\leq g$, we get%
\[
f^{\ast}=f^{\ast}\wedge g=f\wedge g^{\ast}\leq g^{\ast},
\]
which gives the desired inequality.

$\mathrm{(iii)}$ As in the proof of $\mathrm{(ii)}$, from $f^{\ast}\leq f\leq
g$ it follows that%
\[
f^{\ast}=f^{\ast}\wedge g=f\wedge g^{\ast}=f.
\]
This means $f$ is a fixed point of $\ast$.

$\mathrm{(iv)}$ Using $\mathrm{(ii)}$, we have $\left(  f\wedge g\right)
^{\ast}\leq f^{\ast}$ and $\left(  f\wedge g\right)  ^{\ast}\leq g^{\ast}$.
Thus,%
\[
\left(  f\wedge g\right)  ^{\ast}\leq f^{\ast}\wedge g^{\ast}=f\wedge g\wedge
g^{\ast}=\left(  f\wedge g\right)  ^{\ast}\wedge g\leq\left(  f\wedge
g\right)  ^{\ast}.
\]
This shows the first equality. Now, by $\mathrm{(i)}$, we have%
\begin{align*}
f^{\ast}\vee g^{\ast}  &  =\left(  f\vee g\right)  \wedge\left(  f^{\ast}\vee
g^{\ast}\right) \\
&  =\left(  \left(  f\vee g\right)  \wedge f^{\ast}\right)  \vee\left(
\left(  f\vee g\right)  \wedge g^{\ast}\right) \\
&  =\left(  \left(  f\vee g\right)  ^{\ast}\wedge f\right)  \vee\left(
\left(  f\vee g\right)  ^{\ast}\wedge g\right) \\
&  =\left(  f\vee g\right)  ^{\ast}\wedge\left(  f\vee g\right)  =\left(
f\vee g\right)  ^{\ast},
\end{align*}
and the second equality follows.

$\mathrm{(v)}$ From $\mathrm{(i)}$ it follows that $f^{\ast}\leq f\leq f+g$
and $g^{\ast}\leq g\leq f+g$. Hence, using the classical Birkhoff's Inequality
(see \cite[Theorem 1.9. (b)]{AB06}), we obtain
\begin{align*}
\left\vert f^{\ast}-g^{\ast}\right\vert  &  =\left\vert f^{\ast}\wedge\left(
f+g\right)  -g^{\ast}\wedge\left(  f+g\right)  \right\vert =\left\vert
f\wedge\left(  f+g\right)  ^{\ast}-g\wedge\left(  f+g\right)  ^{\ast
}\right\vert \\
&  \leq\left\vert f-g\right\vert \wedge\left(  f+g\right)  ^{\ast}=\left\vert
f-g\right\vert ^{\ast}\wedge\left(  f+g\right)  \leq\left\vert f-g\right\vert
^{\ast}.
\end{align*}
The proof of the lemma is now complete.
\end{proof}

An element $f$ in a truncated Riesz space $E$ is said to be $^{\ast}%
$\textsl{-infinitely small} if%
\[
\varepsilon\left\vert f\right\vert \in P_{\ast}\left(  E\right)  \text{, for
all }\varepsilon\in\left(  0,\infty\right)  .
\]
The set $E_{\ast}$ of all $^{\ast}$-infinitely small elements in $E$ enjoys an
interesting algebraic property.

\begin{lemma}
Let $E$ be a truncated Riesz space. Then $E_{\ast}$ is an ideal in $E$.
\end{lemma}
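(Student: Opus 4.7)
The plan is to verify that $E_{\ast}$ satisfies the three defining conditions of an order ideal: closure under addition, closure under scalar multiplication, and solidity. The last two will be straightforward consequences of the preceding lemma; the addition axiom is the only one that needs a real idea.

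Solidity comes first and uses part (iii) of Lemma \ref{elem}. If $|f|\leq|g|$ with $g\in E_{\ast}$, then for every $\varepsilon\in(0,\infty)$ we have $\varepsilon|f|\leq\varepsilon|g|$ and $\varepsilon|g|\in P_{\ast}(E)$ by hypothesis; part (iii) of the lemma then places $\varepsilon|f|$ in $P_{\ast}(E)$, giving $f\in E_{\ast}$. Closure under scalar multiplication is then immediate from $|\lambda f|=|\lambda||f|$ together with the fact that the set of admissible $\varepsilon$'s is all of $(0,\infty)$: for $\lambda\neq 0$, $(\varepsilon|\lambda f|)^{\ast}=((\varepsilon|\lambda|)|f|)^{\ast}=(\varepsilon|\lambda|)|f|=\varepsilon|\lambda f|$, and the case $\lambda=0$ is trivial.

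The main obstacle is closure under addition, because it is not clear a priori that the sum of two fixed points of $\ast$ is again a fixed point. My plan is to sidestep this by combining parts (iii) and (iv) of Lemma \ref{elem}. The key observation is the pointwise estimate
\[
\varepsilon|f+g|\leq\varepsilon(|f|+|g|)\leq 2\varepsilon(|f|\vee|g|)=(2\varepsilon|f|)\vee(2\varepsilon|g|).
\]
When $f,g\in E_{\ast}$, both $2\varepsilon|f|$ and $2\varepsilon|g|$ lie in $P_{\ast}(E)$, and part (iv) of Lemma \ref{elem} asserts that $P_{\ast}(E)$ is a lattice, so their join remains in $P_{\ast}(E)$. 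Invoking part (iii) one more time forces $\varepsilon|f+g|\in P_{\ast}(E)$. Since $\varepsilon>0$ was arbitrary, $f+g\in E_{\ast}$, completing the verification that $E_{\ast}$ is an ideal.
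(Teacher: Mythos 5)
Your proof is correct and follows essentially the same route as the paper: the crux in both is to dominate $\varepsilon|f+g|$ (the paper treats $\varepsilon|f+\alpha g|$ with the factor $1+|\alpha|$ in one stroke) by a join of elements of $P_{\ast}(E)$, then invoke part (iv) of Lemma \ref{elem} to keep the join in $P_{\ast}(E)$ and part (iii) to descend back down; solidity is handled identically. Your separation of additivity from scalar multiplication (via reparametrizing $\varepsilon$) is only a cosmetic difference.
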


\begin{proof}
Let $f,g\in E_{\ast}$ and $\alpha\in\mathbb{R}$. Pick $\varepsilon\in\left(
0,\infty\right)  $ and observe that%
\begin{align*}
\varepsilon\left\vert f+\alpha g\right\vert  &  \leq\left(  1+\left\vert
\alpha\right\vert \right)  \left(  \frac{1}{1+\left\vert \alpha\right\vert
}\varepsilon\left\vert f\right\vert +\frac{\left\vert \alpha\right\vert
}{1+\left\vert \alpha\right\vert }\varepsilon\left\vert g\right\vert \right)
\\
&  \leq\left(  1+\left\vert \alpha\right\vert \right)  \varepsilon\left\vert
f\right\vert \vee\left(  1+\left\vert \alpha\right\vert \right)
\varepsilon\left\vert g\right\vert .
\end{align*}
However, using Lemma \ref{elem} $\mathrm{(iv)}$, we find%
\begin{align*}
\left(  \left(  1+\left\vert \alpha\right\vert \right)  \varepsilon\left\vert
f\right\vert \vee\left(  1+\left\vert \alpha\right\vert \right)
\varepsilon\left\vert g\right\vert \right)  ^{\ast}  &  =\left(  \left(
1+\left\vert \alpha\right\vert \right)  \varepsilon\left\vert f\right\vert
\right)  ^{\ast}\vee\left(  \left(  1+\left\vert \alpha\right\vert \right)
\varepsilon\left\vert g\right\vert \right)  ^{\ast}\\
&  =\left(  1+\left\vert \alpha\right\vert \right)  \varepsilon\left\vert
f\right\vert \vee\left(  1+\left\vert \alpha\right\vert \right)
\varepsilon\left\vert g\right\vert .
\end{align*}
It follows that%
\[
\left(  1+\left\vert \alpha\right\vert \right)  \varepsilon\left\vert
f\right\vert \vee\left(  1+\left\vert \alpha\right\vert \right)
\varepsilon\left\vert g\right\vert \in P_{\ast}\left(  E\right)  \mathrm{.}%
\]
and so, by Lemma \ref{elem} $\mathrm{(iii)}$, $\varepsilon\left\vert f+\alpha
g\right\vert \in P_{\ast}\left(  E\right)  $. But then $\left\vert f+\alpha
g\right\vert \in E_{\ast}$ because $\varepsilon$ is arbitrary in $\left(
0,\infty\right)  $. Accordingly, $E_{\ast}$ is a vector subspace of $E$. Now,
let $f,g\in E$ such that $\left\vert f\right\vert \leq\left\vert g\right\vert
$ and $g\in E_{\ast}$. If $\varepsilon\in\left(  0,\infty\right)  $ then
$\varepsilon\left\vert f\right\vert \leq\varepsilon\left\vert g\right\vert \in
P_{\ast}\left(  E\right)  $. Using once again \ref{elem} $\mathrm{(iii)}$, we
conclude that $E_{\ast}$ is a solid in $E$ and the lemma follows.
\end{proof}

Throughout this paper, if $I$ is an ideal in a Riesz space $E$, then the
equivalence class in the quotient Riesz space $E/I$ of an element $f\in E$
will be denoted by $I\left(  f\right)  $. In other words, the canonical
surjection from $E\ $onto $E/I$, which is a Riesz homomorphism, will be
denoted by $I$ as well (we refer the reader to \cite[Section 18]{LZ71} for
quotient Riesz spaces).

In what follows, we show that the quotient Riesz space $E/E_{\ast}$ can be
equipped with a truncation in a natural way.

\begin{proposition}
\label{quotient}Let $E$ be a truncated Riesz space. Then, the unary operation
$E_{\ast}\left(  f\right)  \rightarrow E_{\ast}\left(  f\right)  ^{\ast}$ on
$\left(  E/E_{\ast}\right)  ^{+}$ given by%
\[
E_{\ast}\left(  f\right)  ^{\ast}=E_{\ast}\left(  f^{\ast}\right)  \text{, for
all }f\in E^{+}\text{,}%
\]
is a truncation on $E/E_{\ast}$. Moreover, the truncated Riesz space
$E/E_{\ast}$ has no nonzero $^{\ast}$-infinitely small elements.
\end{proposition}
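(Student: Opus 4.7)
The plan splits into three steps: (a) show the assignment $E_{\ast}(f) \mapsto E_{\ast}(f^{\ast})$ is well defined on $(E/E_{\ast})^{+}$; (b) verify the truncation identity on the quotient; and (c) prove that $E/E_{\ast}$ has no nonzero $^{\ast}$-infinitely small elements. Throughout I will exploit that the quotient map $E \to E/E_{\ast}$ is a Riesz homomorphism, so every element of $(E/E_{\ast})^{+}$ admits a positive representative $f \in E^{+}$ (take $f^{+}$); this is what legitimizes evaluating $f^{\ast}$ on representatives.

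For well-definedness, I will take $f, g \in E^{+}$ with $E_{\ast}(f) = E_{\ast}(g)$, i.e.\ $f - g \in E_{\ast}$. Lemma \ref{elem} $\mathrm{(v)}$ together with Lemma \ref{elem} $\mathrm{(i)}$ give
\[
\left\vert f^{\ast} - g^{\ast} \right\vert \leq \left\vert f - g \right\vert^{\ast} \leq \left\vert f - g \right\vert \in E_{\ast},
\]
and since $E_{\ast}$ is an ideal, $f^{\ast} - g^{\ast} \in E_{\ast}$, so $E_{\ast}(f^{\ast}) = E_{\ast}(g^{\ast})$. For the truncation identity (b), picking positive representatives $f, g \in E^{+}$, the quotient map being a Riesz homomorphism yields
\[
E_{\ast}(f)^{\ast} \wedge E_{\ast}(g) = E_{\ast}(f^{\ast} \wedge g) = E_{\ast}(f \wedge g^{\ast}) = E_{\ast}(f) \wedge E_{\ast}(g)^{\ast},
\]
where the middle equality is the truncation axiom on $E$.

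The hard part will be (c). Suppose $E_{\ast}(f)$ with $f \in E^{+}$ is $^{\ast}$-infinitely small; unpacking the definition, $u_{\varepsilon} := \varepsilon f - (\varepsilon f)^{\ast}$ belongs to $E_{\ast}$ for every $\varepsilon > 0$. The trouble is that this only records the \emph{deviation} of $\varepsilon f$ from being a fixed point, so I need to bootstrap it into $u_{\varepsilon} = 0$. The key observation is that $u_{\varepsilon} \in E_{\ast}^{+}$, so (taking $\delta = 1$ in the definition) $u_{\varepsilon}$ is itself a fixed point of $\ast$. Combining $u_{\varepsilon} = u_{\varepsilon}^{\ast}$ with $u_{\varepsilon} \leq \varepsilon f$ and Lemma \ref{elem} $\mathrm{(ii)}$ gives $u_{\varepsilon} \leq (\varepsilon f)^{\ast}$, which rearranges to the crude bound $\varepsilon f \leq 2(\varepsilon f)^{\ast}$.

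To upgrade this crude bound, I will apply the same inequality with $2\varepsilon$ in place of $\varepsilon$, getting $2\varepsilon f \leq 2(2\varepsilon f)^{\ast}$, hence $\varepsilon f \leq (2\varepsilon f)^{\ast}$. Since $(2\varepsilon f)^{\ast} \in P_{\ast}(E)$ by Lemma \ref{elem} $\mathrm{(i)}$, Lemma \ref{elem} $\mathrm{(iii)}$ forces $\varepsilon f \in P_{\ast}(E)$, i.e.\ $(\varepsilon f)^{\ast} = \varepsilon f$. As $\varepsilon > 0$ was arbitrary, $f \in E_{\ast}$, so $E_{\ast}(f) = 0$ in the quotient, which completes the proof.
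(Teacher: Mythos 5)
Your proof is correct and follows essentially the same route as the paper: well-definedness via Lemma \ref{elem} $\mathrm{(v)}$ and the ideal property of $E_{\ast}$, the truncation identity via the quotient Riesz homomorphism, and part (c) via the decomposition $\varepsilon f = u_{\varepsilon} + (\varepsilon f)^{\ast}$ with $u_{\varepsilon} = \varepsilon f - (\varepsilon f)^{\ast} \in E_{\ast}^{+} \subset P_{\ast}(E)$. The only (minor, and arguably cleaner) difference is how you dominate $\varepsilon f$ by a fixed point: you use monotonicity (Lemma \ref{elem} $\mathrm{(ii)}$) to get $u_{\varepsilon} \leq (\varepsilon f)^{\ast}$, hence $\varepsilon f \leq 2(\varepsilon f)^{\ast}$, and absorb the factor $2$ by replacing $\varepsilon$ with $2\varepsilon$, whereas the paper uses $a+b \leq 2(a \vee b)$ together with Lemma \ref{elem} $\mathrm{(iv)}$ and then halves $\varepsilon$.
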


\begin{proof}
Let $f,g\in E$ such that $E_{\ast}\left(  f\right)  =E_{\ast}\left(  g\right)
$. Hence, $f-g\in E_{\ast}$ and so $\left\vert f-g\right\vert ^{\ast}\in
E_{\ast}$ because $E_{\ast}$ is an ideal and $\left\vert f-g\right\vert
^{\ast}\leq\left\vert f-g\right\vert $. Using Lemma \ref{elem} $\mathrm{(v)}$,
we get $\left\vert f^{\ast}-g^{\ast}\right\vert \in E_{\ast}$ and thus
$E_{\ast}\left(  f^{\ast}\right)  =E_{\ast}\left(  g^{\ast}\right)  $.
Moreover, if $f,g\in E^{+}$ then%
\[
E_{\ast}\left(  f^{\ast}\right)  \wedge E_{\ast}\left(  g\right)  =E_{\ast
}\left(  f^{\ast}\wedge g\right)  =E_{\ast}\left(  f\wedge g^{\ast}\right)
=E_{\ast}\left(  f\right)  \wedge E_{\ast}\left(  g^{\ast}\right)  .
\]
It follows that the equality%
\[
E_{\ast}\left(  f\right)  ^{\ast}=E_{\ast}\left(  f^{\ast}\right)  \text{, for
all }f\in E^{+}%
\]
defines a truncation on $E/E_{\ast}$. Furthermore, pick $f\in E^{+}$ and
assume that%
\[
\varepsilon E_{\ast}\left(  f\right)  \in P_{\ast}\left(  E/E_{\ast}\right)
\text{, for all }\varepsilon\in\left(  0,\infty\right)  .
\]
For every $\varepsilon\in\left(  0,\infty\right)  $, we can write%
\[
E_{\ast}\left(  \varepsilon f\right)  =\varepsilon E_{\ast}\left(  f\right)
=\left(  \varepsilon E_{\ast}\left(  f\right)  \right)  ^{\ast}=\left(
E_{\ast}\left(  \varepsilon f\right)  \right)  ^{\ast}=E_{\ast}\left(  \left(
\varepsilon f\right)  ^{\ast}\right)  .
\]
Therefore,%
\[
\varepsilon f-\left(  \varepsilon f\right)  ^{\ast}\in E_{\ast}\text{, for all
}\varepsilon\in\left(  0,\infty\right)  .
\]
In particular,%
\[
\varepsilon f-\left(  \varepsilon f\right)  ^{\ast}\in P_{\ast}\left(
E\right)  \text{, for all }\varepsilon\in\left(  0,\infty\right)
\]
Accordingly, if $\varepsilon\in\left(  0,\infty\right)  $ then%
\begin{align*}
\varepsilon f  &  =\varepsilon f-\left(  \varepsilon f\right)  ^{\ast}+\left(
\varepsilon f\right)  ^{\ast}=\left(  \varepsilon f-\left(  \varepsilon
f\right)  ^{\ast}\right)  ^{\ast}+\left(  \varepsilon f\right)  ^{\ast}\\
&  \leq2\left[  \left(  \varepsilon f-\varepsilon f^{\ast}\right)  ^{\ast}%
\vee\left(  \varepsilon f\right)  ^{\ast}\right]  =2\left[  \left(
\varepsilon f-\varepsilon f^{\ast}\right)  \vee\varepsilon f\right]  ^{\ast}.
\end{align*}
It follows that%
\[
0\leq\frac{\varepsilon}{2}f\leq\left[  \left(  \varepsilon f-\varepsilon
f^{\ast}\right)  \vee\varepsilon f\right]  ^{\ast}\in P_{\ast}\left(
E\right)  \text{, for all }\varepsilon\in\left(  0,\infty\right)  .
\]
But then $f\in E_{\ast}$, so $E_{\ast}\left(  f\right)  =0$. Consequently, the
truncated Riesz space $E/E_{\ast}$ has no nonzero $^{\ast}$-infinitely small
elements, as desired.
\end{proof}

\section{Spectrum of a truncated Riesz space}

Let $E,F$ be two truncated Riesz spaces. A Riesz homomorphism $T:E\rightarrow
F$ is called a $^{\ast}$-\textsl{homomorphism} if $T$ preserves truncations,
i.e.,%
\[
T\left(  f^{\ast}\right)  =T\left(  f\right)  ^{\ast}\text{, for all }f\in
E^{+}.
\]
For instance, it follows directly from Proposition \ref{quotient} that the
canonical surjection $E_{\ast}:E\rightarrow E/E_{\ast}$ is a $^{\ast}%
$-homomorphism. Clearly, if $T:E\rightarrow F$ is a bijective $^{\ast}%
$-homomorphism, then its inverse $T^{-1}:F\rightarrow E$ is a $^{\ast}%
$-homomorphism. In this situation, $T$ is called a $^{\ast}$%
\textsl{-isomorphism} and $E,F$ are said to be $^{\ast}$\textsl{-isomorphic}.
Now, we define the \textsl{canonical truncation} $\ast$ on the Riesz space
$\mathbb{R}$ of all real numbers by putting%
\[
r^{\ast}=\min\left\{  1,r\right\}  \text{, for all }r\in\left[  0,\infty
\right)  .
\]
The set of all nonzero $^{\ast}$-homomorphisms from $E$ onto $\mathbb{R}$ is
called the \textsl{spectrum} of $E$ and denoted by $\eta E$.

\begin{quote}
\textsl{Beginning with the next lines, }$\eta E$ \textsl{will be equipped with
the topology inherited from the product topology on the Tychonoff space}
$\mathbb{R}^{E}$.
\end{quote}

\noindent Recall here that the product topology on $\mathbb{R}^{E}$ is the
coarsest topology on $\mathbb{R}^{E}$ for which all projections $\pi_{f}$
$\left(  f\in E\right)  $ are continuous, where $\pi_{f}:\mathbb{R}%
^{E}\rightarrow\mathbb{R}$ is defined by%
\[
\pi_{f}\left(  \phi\right)  =\phi\left(  f\right)  \text{, for all }\phi
\in\mathbb{R}^{E}.
\]
A truncation on the Riesz space $E$ is said to be \textsl{strong} if, for each
$f\in E$, there exists $\varepsilon\in\left(  0,\infty\right)  $ such that
$\varepsilon f\in P_{\ast}\left(  E\right)  $. A Riesz space along with a
strong truncation is called a \textsl{strongly truncated Riesz space. }It
turns out that the spectrum of a strongly truncated Riesz space has an
interesting topological property.

\begin{lemma}
\label{loc}The spectrum $\eta E$ of a strongly truncated Riesz space $E$ is
locally compact.
\end{lemma}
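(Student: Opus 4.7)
The plan is to exhibit $\eta E$ as an open subset of a compact Hausdorff space, from which local compactness follows immediately. The compact space I have in mind is the closure $\overline{\eta E}$ inside $\mathbb{R}^E$; the key observation is that the strong truncation hypothesis forces each coordinate projection to remain bounded on $\eta E$, so this closure lies inside a product of compact intervals.

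First, I would use strong truncation to obtain pointwise bounds. Given $f\in E^{+}$, strong truncation supplies $\varepsilon_f>0$ with $(\varepsilon_f f)^{\ast}=\varepsilon_f f$. For any $u\in\eta E$ the defining identity $u(h^{\ast})=\min\{1,u(h)\}$ applied to $h=\varepsilon_f f$ gives
\[
\varepsilon_f u(f)=u(\varepsilon_f f)=u\bigl((\varepsilon_f f)^{\ast}\bigr)=\min\{1,\varepsilon_f u(f)\},
\]
which is possible only if $\varepsilon_f u(f)\le 1$, i.e.\ $0\le u(f)\le 1/\varepsilon_f$. Splitting an arbitrary $f\in E$ as $f=f^{+}-f^{-}$ and using that $u$ is a Riesz homomorphism yields a bound $|u(f)|\le M_f:=1/\varepsilon_{f^{+}}+1/\varepsilon_{f^{-}}$ that depends only on $f$. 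Consequently
\[
\eta E\subseteq K:=\prod_{f\in E}[-M_f,M_f],
\]
and $K$ is compact Hausdorff by Tychonoff's theorem.

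Next I would check that $\eta E\cup\{0\}$ is closed in $\mathbb{R}^{E}$, hence in $K$. This is routine: the set of $*$-homomorphisms from $E$ to $\mathbb{R}$ (possibly the zero map) is cut out inside $\mathbb{R}^{E}$ by the equalities $u(f+g)=u(f)+u(g)$, $u(\lambda f)=\lambda u(f)$, $u(f\vee g)=u(f)\vee u(g)$, and $u(f^{\ast})=\min\{1,u(f)\}$ for $f\in E^{+}$. Each of these identities is preserved under pointwise (hence product-topology) convergence because the operations $+$, scalar multiplication, $\vee$, and $r\mapsto\min\{1,r\}$ are continuous on $\mathbb{R}$. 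Thus $\eta E\cup\{0\}$ is a closed subset of the compact $K$, therefore compact. Since $\{0\}$ is a closed singleton in the Hausdorff space $\eta E\cup\{0\}$, the spectrum $\eta E=(\eta E\cup\{0\})\setminus\{0\}$ is an open subset of a compact Hausdorff space, and every such subspace is locally compact.

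I do not foresee a real obstacle here; the only delicate step is the derivation of the coordinate bound from strong truncation, and that follows from the short homomorphism calculation above. The remaining ingredients (Tychonoff, closedness under pointwise limits, openness of a singleton complement in a Hausdorff space) are standard.
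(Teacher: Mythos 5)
Your proposal is correct and follows essentially the same route as the paper: bound each coordinate via the strong truncation to place $\eta E\cup\{0\}$ inside a product of compact intervals, observe that this set is closed in $\mathbb{R}^{E}$ because the defining identities pass to pointwise limits, and conclude that $\eta E$ is open in a compact Hausdorff space. The only cosmetic difference is that you bound $|u(f)|$ by splitting $f=f^{+}-f^{-}$, whereas the paper applies the truncation condition to $|f|$ directly; both are fine.
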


\begin{proof}
If $f\in E$, then there exists $\varepsilon_{f}\in\left(  0,\infty\right)  $
such that $\varepsilon_{f}\left\vert f\right\vert \in P_{\ast}\left(
E\right)  $. So, for each $u\in\eta E$, we have%
\[
\left\vert u\left(  f\right)  \right\vert =\frac{1}{\varepsilon_{f}}u\left(
\left\vert \varepsilon_{f}f\right\vert ^{\ast}\right)  =\frac{1}%
{\varepsilon_{f}}\min\left\{  1,u\left(  \left\vert \varepsilon_{f}%
f\right\vert \right)  \right\}  \leq\frac{1}{\varepsilon_{f}}.
\]
Accordingly,%
\[
\eta E\cup\left\{  0\right\}  \subset%
{\displaystyle\prod\limits_{f\in E}}
\left[  -\frac{1}{\varepsilon_{f}},\frac{1}{\varepsilon_{f}}\right]  .
\]
On the other hand, it takes no more than a moment's thought to see that $\eta
E\cup\left\{  0\right\}  $ is the intersection of the closed sets%
\[%
\begin{tabular}
[c]{c}%
$%
{\displaystyle\bigcap\limits_{f,g\in E,\lambda\in\mathbb{R}}}
\left(  \pi_{f+\lambda g}-\pi_{f}-\lambda\pi_{g}\right)  ^{-1}\left(  \left\{
0\right\}  \right)  \text{,}$\\
$%
{\displaystyle\bigcap\limits_{f\in E}}
\left(  \left\vert \pi_{f}\right\vert -\pi_{\left\vert f\right\vert }\right)
^{-1}\left(  \left\{  0\right\}  \right)  \text{, and }%
{\displaystyle\bigcap\limits_{f\in E_{+}}}
\left(  \pi_{f^{\ast}}-1\wedge\pi_{f}\right)  ^{-1}\left(  \left\{  0\right\}
\right)  .$%
\end{tabular}
\
\]
It follows that $\eta E\cup\left\{  0\right\}  $ is again a closed set in
$\mathbb{R}^{E}$. In summary, $\eta E\cup\left\{  0\right\}  $ is a closed
subset of a compact set and so it is compact. But then $\eta E$ is locally
compact since it is an open subset in a compact set.
\end{proof}

The following simple lemma is needed to establish the next theorem.

\begin{lemma}
\label{weak}Let $E$ be a strongly truncated Riesz space. If $f\in E^{+}$ and
$f^{\ast}=0$ then $f=0$.
\end{lemma}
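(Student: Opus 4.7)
The plan is to use the strong truncation hypothesis to extract a scalar $\varepsilon > 0$ such that $\varepsilon f$ is a fixed point of $\ast$, and then combine this with parts (ii) and (iii) of Lemma \ref{elem} to force $f = 0$.

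Concretely, since $\ast$ is strong, I would first pick $\varepsilon \in (0, \infty)$ with $(\varepsilon f)^{\ast} = \varepsilon f$, i.e., $\varepsilon f \in P_{\ast}(E)$. The natural next move is to compare $\varepsilon$ with $1$. In the case $\varepsilon \leq 1$, one has $\varepsilon f \leq f$, so monotonicity of $\ast$ (Lemma \ref{elem}(ii)) yields $\varepsilon f = (\varepsilon f)^{\ast} \leq f^{\ast} = 0$, which immediately gives $f = 0$. In the case $\varepsilon > 1$, one has $f \leq \varepsilon f$, and since $\varepsilon f \in P_{\ast}(E)$, the downward-hereditariness of fixed points (Lemma \ref{elem}(iii)) forces $f \in P_{\ast}(E)$; combined with the assumption $f^{\ast} = 0$ this again yields $f = 0$.

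A slicker unified argument is to set $\delta = \min(\varepsilon, 1) > 0$, note that $\delta f \leq \varepsilon f \in P_{\ast}(E)$ so that $\delta f \in P_{\ast}(E)$ by Lemma \ref{elem}(iii), and then use $\delta f \leq f$ together with monotonicity to deduce $\delta f = (\delta f)^{\ast} \leq f^{\ast} = 0$; dividing by $\delta$ then finishes the proof.

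I do not anticipate any real obstacle: the statement is essentially a quantitative consequence of Lemma \ref{elem} once the defining property of a strong truncation is invoked. The only thing to be careful about is that the scalar $\varepsilon$ supplied by strongness is a priori unrelated to $1$, which is why the case split (or the $\min$ trick) is needed to bring the comparison $\varepsilon f$ versus $f$ into alignment with the monotonicity of $\ast$.
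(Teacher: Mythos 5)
Your proposal is correct and follows essentially the same route as the paper: choose $\varepsilon$ with $\varepsilon f\in P_{\ast}(E)$, split on $\varepsilon\leq 1$ versus $\varepsilon>1$, and use monotonicity of $\ast$ in the first case; in the second case the paper computes $f=f\wedge(\varepsilon f)^{\ast}=f^{\ast}\wedge\varepsilon f=0$ directly, which is exactly the computation underlying the Lemma~\ref{elem}(iii) step you invoke. Your $\min(\varepsilon,1)$ variant is a harmless cosmetic unification of the same argument.
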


\begin{proof}
Choose $\varepsilon\in\left(  0,\infty\right)  $ such that $\varepsilon f\in
P_{\ast}\left(  f\right)  $. If $\varepsilon\leq1$ then%
\[
0\leq\varepsilon f=\left(  \varepsilon f\right)  ^{\ast}\leq f^{\ast}=0
\]
(where we use Lemma \ref{elem} $\mathrm{(ii)}$). Suppose now that
$\varepsilon>1$. We get%
\[
0\leq f=f\wedge\varepsilon f=f\wedge\left(  \varepsilon f\right)  ^{\ast
}=f^{\ast}\wedge\varepsilon f=0.
\]
This completes the proof of the lemma.
\end{proof}

In fact, Lemma \ref{weak} tells us that any strong truncation on Riesz space
is a weak truncation.

Now, the kernel of any $^{\ast}$-homomorphism $u\in\eta E$ is denoted by $\ker
u$. The following theorem will turn out to be crucial for later purposes.
Actually, we are indebted to professor Richard Ball for his significant help.
Indeed, trying to prove the result, we ran into a serious problem and it was
only through him that things were done.

\begin{theorem}
\label{spectrum}Let $E$ be a strongly truncated Riesz space. Then%
\[%
{\displaystyle\bigcap\limits_{u\in\eta E}}
\ker u=E_{\ast}.
\]

\end{theorem}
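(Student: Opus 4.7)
The plan is to prove the two inclusions separately. The containment $E_{*}\subset\bigcap_{u\in\eta E}\ker u$ is routine: for $u\in\eta E$ and $f\in E_{*}$, every $\varepsilon>0$ makes $\varepsilon|f|$ a fixed point of the truncation, so the $\ast$-homomorphism identity gives $\varepsilon u(|f|)=u(\varepsilon|f|)=\min\{1,\varepsilon u(|f|)\}\le 1$. Letting $\varepsilon\to\infty$ forces $u(|f|)=0$ and hence $u(f)=0$.

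For the reverse inclusion I would first reduce to the case $E_{*}=0$. The strong truncation on $E$ descends to a strong truncation on $E/E_{*}$ (given $\bar{f}\in E/E_{*}$, lift to $f\in E$ and apply strong truncation to that lift), while Proposition \ref{quotient} already ensures $(E/E_{*})_{\ast}=0$. The easy direction shows each $u\in\eta E$ kills $E_{*}$, so $u$ factors through the canonical surjection as some $\bar{u}\in\eta(E/E_{*})$; this yields a bijection $\eta E\leftrightarrow\eta(E/E_{*})$ under which $\bigcap_{u\in\eta E}\ker u$ is the preimage of $\bigcap_{\bar{u}\in\eta(E/E_{*})}\ker\bar{u}$. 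Thus the theorem reduces to showing $\bigcap_{u\in\eta E}\ker u=0$ under the assumption $E_{*}=0$.

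Now assume $E_{*}=0$. The decisive move is to invoke Ball's Yosida-type representation from \cite{B14,B14-0}: since $E$ is weakly truncated by Lemma \ref{weak} and has no nonzero $\ast$-infinitely small elements, there exist a compact Hausdorff space $K$ and a truncation-preserving Riesz embedding $T$ of $E$ into the Riesz space of continuous extended-real-valued functions on $K$. The strong truncation then upgrades the image of $T$ from extended-real to bounded real-valued. Indeed, given $h\in E^{+}$, pick $\delta>0$ with $\delta h\in P_{\ast}(E)$ and apply $T$ to the equality $(\delta h)^{\ast}=\delta h$ to obtain $\delta T(h)\wedge 1=\delta T(h)$ pointwise, whence $T(h)\le 1/\delta$ on $K$. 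Extending to general $h\in E$ by decomposing into positive and negative parts, $T$ actually lands in $C(K)$.

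With this in hand the conclusion is immediate. For each $x\in K$, the point evaluation $\phi_{x}:h\mapsto T(h)(x)$ is a Riesz homomorphism $E\to\mathbb{R}$, and the pointwise identity $T(h^{\ast})=T(h)\wedge 1$ instantly yields $\phi_{x}(h^{\ast})=\min\{1,\phi_{x}(h)\}$. For any $0\neq f\in E$, injectivity of $T$ gives an $x\in K$ with $T(f)(x)\neq 0$; the corresponding $\phi_{x}$ is then a nonzero Riesz homomorphism into $\mathbb{R}$, automatically onto (any nonzero Riesz subspace of $\mathbb{R}$ is all of $\mathbb{R}$), and so an element of $\eta E$ that does not annihilate $f$. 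I anticipate that the boundedness step is the principal obstacle: it is precisely where strong truncation converts the Yosida-style extended-function representation into a Kakutani-style bounded one, and is presumably the step at which the authors credit Ball's assistance.
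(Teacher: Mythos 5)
Your first two steps (the easy inclusion and the reduction to $E_{*}=\{0\}$ via $E/E_{*}$ and the bijection $\eta E\leftrightarrow\eta(E/E_{*})$) are correct and match what the paper does, albeit in the opposite order. The problem is the decisive step. You invoke Ball's Yosida-type representation under the hypotheses ``weakly truncated and no nonzero $^{\ast}$-infinitely small elements.'' That is how the introduction of this paper informally paraphrases Ball's result, but taken literally it is false, and the paper itself supplies the counterexample: the lexicographically ordered plane with $(x,y)^{\ast}=(x,y)\wedge(0,1)$ is weakly truncated, has no nonzero $^{\ast}$-infinitely small elements, and is non-Archimedean, hence cannot embed as a Riesz space into the (Archimedean) lattice of almost-finite continuous extended-real functions on any compact Hausdorff space. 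Ball's representation theorem requires the Archimedean property, which you neither assume nor verify; as written, your argument would ``prove'' a representation for that example too. The gap is repairable: a strongly truncated Riesz space with no nonzero $^{\ast}$-infinitely small elements \emph{is} Archimedean (if $0\le nf\le g$ for all $n$, choose $\delta$ with $\delta g\in P_{\ast}(E)$; for any $\varepsilon>0$ and $n$ with $1/n\le\delta$ one gets $\varepsilon f\le\frac{1}{n}g\le\delta g$, so $\varepsilon f\in P_{\ast}(E)$ by Lemma \ref{elem}(iii), whence $f\in E_{\ast}=\{0\}$). But this verification must be supplied before Ball's theorem can be cited, and you explicitly flagged the wrong step (the boundedness upgrade, which is fine) as the delicate one.

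Even once patched, your route is genuinely different from the paper's and considerably less self-contained: the paper constructs the separating homomorphisms directly, choosing a prime ideal $P$ missing $(f-f^{\ast})^{\ast}$, enlarging it to a prime ideal $Q$ maximal with respect to omitting $f^{\ast}$, showing $Q(f^{\ast})$ is a strong unit in $E/Q$ and that $E/Q\cong\mathbb{R}$, using only \cite[Theorems 27.1 and 33.5]{LZ71}. Your approach buys brevity at the cost of importing the full strength of Ball's pointed representation, whose proof is itself a Yosida-type construction of comparable depth.
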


\begin{proof}
First, we assume that $E_{\ast}=\left\{  0\right\}  $. We shall prove that if
$0<f\in E$, then $u\left(  f\right)  >0$ for some $u\in\eta E$. Choose
$\varepsilon\in\left(  0,\infty\right)  $ for which $\varepsilon f\notin
P_{\ast}\left(  E\right)  $. By replacing $f$ by $\varepsilon f$ if needed, we
may suppose that $f^{\ast}<f$, so $\left(  f-f^{\ast}\right)  ^{\ast}>0$ (see
Lemma \ref{weak}). Let $P$ be a prime ideal in $E$ such that $\left(
f-f^{\ast}\right)  ^{\ast}\notin P$ (such an ideal exists by \cite[Theorem
33.5]{LZ71}). If $g\in E^{+}$ then%
\[
0<\left(  f-f^{\ast}\right)  ^{\ast}\leq f-f^{\ast}\leq f-\left(  f^{\ast
}\wedge g\right)  =f-\left(  f\wedge g^{\ast}\right)  =\left(  f-g^{\ast
}\right)  ^{+}.
\]
Accordingly, $\left(  f-g^{\ast}\right)  ^{+}\notin P$ and, as $P$ is prime,%
\[
g^{\ast}-g^{\ast}\wedge f=\left(  g^{\ast}-f\right)  ^{+}=\left(  f-g^{\ast
}\right)  ^{-}\in P\text{.}%
\]
On the other hand, by Theorem 33.5 in \cite{LZ71}, there exists a prime ideal
$Q$, containing $P$, which is maximal with respect to the property of not
containing $f^{\ast}$. Of course, such a prime ideal $Q$ indeed exists because
$f^{\ast}\notin P$. Then, from%
\[
g^{\ast}-g\wedge f^{\ast}\in P\subset Q\text{, for all }g\in E^{+},
\]
it follows that the (in)equalities%
\[
Q\left(  g^{\ast}\right)  =Q\left(  g\wedge f^{\ast}\right)  =Q\left(
g\right)  \wedge Q\left(  f^{\ast}\right)  \leq Q\left(  f^{\ast}\right)
\]
hold in the quotient Riesz space $E/Q$ for all $g\in E^{+}$. Let $g\in E^{+}$
and $\varepsilon\in\left(  0,\infty\right)  $ such that $\varepsilon g\in
P_{\ast}\left(  E\right)  $. So,%
\[
Q\left(  g\right)  =\frac{1}{\varepsilon}Q\left(  \varepsilon g\right)
=\frac{1}{\varepsilon}Q\left(  \left(  \varepsilon g\right)  ^{\ast}\right)
\leq\frac{1}{\varepsilon}Q\left(  f^{\ast}\right)  .
\]
We derive that $Q\left(  f^{\ast}\right)  $ is a strong unit in $E/Q$. We
claim that $E/Q$ is Riesz isomorphic to $\mathbb{R}$. To this end, let $I$ be
a proper ideal in $E/Q$ and set%
\[
Q^{-1}\left(  I\right)  =\left\{  g\in E:Q\left(  g\right)  \in I\right\}  .
\]
Then, $Q^{-1}\left(  I\right)  $ is an ideal in $E$ containing $Q$. If
$f^{\ast}\in Q^{-1}\left(  I\right)  $ then $Q\left(  f^{\ast}\right)  \in I$
and so $I=E/Q$ (as $Q\left(  f^{\ast}\right)  $ is a strong unit in
$E/E_{\ast}$), a contradiction. Accordingly, $f^{\ast}\notin Q^{-1}\left(
I\right)  $ and so, by maximality, $Q^{-1}\left(  I\right)  =Q$. We derive
straightforwardly that $I=\left\{  0\right\}  $ and, in view of \cite[Theorem
27.1]{LZ71}, there exists a Riesz isomorphism $\varphi:E/Q\rightarrow
\mathbb{R}$ with $\varphi\left(  Q\left(  f^{\ast}\right)  \right)  =1$. Put
$u=\varphi\circ Q$ and notice that $u$ is a Riesz homomorphism. Moreover, if
$g\in E^{+}$ then%
\begin{align*}
u\left(  g^{\ast}\right)   &  =\left(  \varphi\circ Q\right)  \left(  g^{\ast
}\right)  =\varphi\left(  Q\left(  g^{\ast}\right)  \right)  =\varphi\left(
Q\left(  g\right)  \wedge Q\left(  f^{\ast}\right)  \right) \\
&  =\varphi\left(  Q\left(  g\right)  \right)  \wedge\varphi\left(  Q\left(
f^{\ast}\right)  \right)  =\min\left\{  1,u\left(  g\right)  \right\}  .
\end{align*}
This yields that $u\in\eta E$. Furthermore,%
\[
u\left(  f\right)  \geq u\left(  f^{\ast}\right)  =\varphi\left(  Q\left(
f^{\ast}\right)  \right)  =1>0,
\]
Consequently,%
\[%
{\displaystyle\bigcap\limits_{u\in\eta E}}
\ker u=\left\{  0\right\}  =E_{\ast}.
\]
Let's discuss the general case. Pick $f\in E$ such that%
\[
u\left(  f\right)  =0\text{, for all }u\in\eta E.
\]
We claim that $f\in E_{\ast}$. To this end, choose $\phi\in\eta\left(
E/E_{\ast}\right)  $ and observe that $\phi\circ E_{\ast}\in\eta E$ (where we
use Proposition \ref{quotient}). This means that%
\[
\phi\left(  E_{\ast}\left(  f\right)  \right)  =\left(  \phi\circ E_{\ast
}\right)  \left(  f\right)  =0.
\]
By the first case, $E_{\ast}\left(  f\right)  =0$ and thus $f\in E_{\ast}$. We
get the inclusion%
\[%
{\displaystyle\bigcap\limits_{u\in\eta E}}
\ker u\subset E_{\ast}.
\]
The converse inclusion is routine.
\end{proof}

\section{The representation theorem}

This section contains the central result of this paper, viz., a representation
theorem for strongly truncated Riesz spaces with no nonzero $^{\ast}%
$-infinitely small elements. As most classical theorems of representation, our
result is based upon a Stone-Weierstrass type theorem, which is presumably
well-known. Unfortunately, we have not been able to locate a precise reference
for it. We have therefore chosen to provide a detailed proof, which is an
adjustment of the proof of the \textquotedblleft algebra
version\textquotedblright\ of the theorem (see, for instance, Corollary 4.3.5
in \cite{P80}). In this regard, we need further prerequisites.

Let $X$ be a locally compact Hausdorff space $X$. The Riesz space of all
real-valued continuous functions on $X$ is denoted by $C\left(  X\right)  $,
as usual. A function $f\in C\left(  X\right)  $ is said to \textsl{vanish at
infinity} if, for every $\varepsilon\in\left(  0,\infty\right)  $, the set%
\[
K\left(  f,\varepsilon\right)  =\left\{  x\in X:\left\vert f\left(  x\right)
\right\vert \geq\varepsilon\right\}
\]
is compact. The collection $C_{0}\left(  X\right)  $ of such functions is a
Riesz subspace of $C\left(  X\right)  $. Actually, $C_{0}\left(  X\right)  $
is a Banach lattice (more precisely, an $AM$-space \cite{S74}) under the
uniform norm given by%
\[
\left\Vert f\right\Vert _{\infty}=\sup\left\{  \left\vert f\left(  x\right)
\right\vert :x\in X\right\}  \text{, for all }f\in C_{0}\left(  X\right)  .
\]
If $f\in C_{0}\left(  X\right)  $, then the real-valued function $f^{\infty}$
defined on the one-point compactification $X_{\infty}=X\cup\left\{
\infty\right\}  $ of $X$ by%
\[
f_{\infty}\left(  \infty\right)  =0\quad\text{and\quad}f_{\infty}\left(
x\right)  =f\left(  x\right)  \text{ if }x\in X
\]
is the unique extension of $f$ in $C\left(  X_{\infty}\right)  $ (see, e.g.,
\cite{E89}). Here too, $C\left(  X_{\infty}\right)  $ is endowed with its
uniform norm defined by%
\[
\left\Vert f\right\Vert _{\infty}=\sup\left\{  \left\vert f\left(  x\right)
\right\vert :x\in X_{\infty}\right\}  \text{, for all }f\in C\left(
X_{\infty}\right)  .
\]
It is an easy exercise to verify that the map $S:C_{0}\left(  X\right)
\rightarrow C\left(  X_{\infty}\right)  $ defined by%
\[
S\left(  f\right)  =f_{\infty}\text{, for all }f\in C_{0}\left(  X\right)
\]
is an isometry and, simultaneously, a Riesz isomorphism. In what follows,
$C_{0}\left(  X\right)  $ will be identified with the range of $S$, which is a
uniformly closed Riesz subspace of $C\left(  X_{\infty}\right)  $. On the
other hand, a subset $D$ of $C_{0}\left(  X\right)  $ is said to
\textsl{vanish nowhere} if for every $x\in X$, there is some $f\in D$ such
that $f\left(  x\right)  \neq0$. Moreover, $D$ is said to \textsl{separate the
points} of $X$ if for every $x,y\in X$ with $x\neq y$, we can find some $f\in
D$ such that $f\left(  x\right)  \neq f\left(  y\right)  $. Following Fremlin
in \cite{F74}, we call a \textsl{truncated Riesz subspace} of $C_{0}\left(
X\right)  $ any Riesz subspace $E$ of $C_{0}\left(  X\right)  $ for which%
\[
1\wedge f\in E\text{, for all }f\in E.
\]
We are in position now to prove the suitable version of the Stone-Weierstrass
theorem we were talking about.

\begin{lemma}
\label{SW}Let $X$ be a locally compact Hausdorff space and $E$ be a truncated
Riesz subspace of $C_{0}\left(  X\right)  $. Then $E$ is uniformly dense in
$C_{0}\left(  X\right)  $ if and only if $E$ vanishes nowhere and separates
the points of $X$.
\end{lemma}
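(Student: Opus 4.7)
The forward implication is routine: if $E$ is uniformly dense in $C_0(X)$, then given $x\in X$, Urysohn's lemma for locally compact Hausdorff spaces produces $f\in C_0(X)$ with $f(x)=1$, and approximating $f$ within $1/2$ by some $g\in E$ yields $g(x)>0$, proving that $E$ vanishes nowhere. Separation of points is handled identically, starting from an $f\in C_0(X)$ that takes the values $1$ at $x$ and $0$ at $y$.

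For the converse, the plan is to lift the problem to the one-point compactification $X_\infty$ via the Riesz isometric embedding $S:C_0(X)\hookrightarrow C(X_\infty)$ and then apply the classical Stone--Weierstrass lattice theorem on the compact Hausdorff space $X_\infty$. Consider $F := E + \mathbb{R}\cdot 1 \subseteq C(X_\infty)$, where $1$ stands for the constant function on $X_\infty$. I intend to verify the three properties (i) $F$ is a vector sublattice of $C(X_\infty)$, (ii) $F$ contains the constants, and (iii) $F$ separates the points of $X_\infty$. Item (ii) is immediate, and for (iii): $E$ already separates the points of $X$ by hypothesis, while for any $x\in X$ the assumption that $E$ vanishes nowhere supplies some $f\in E$ with $f_\infty(x) = f(x)\neq 0 = f_\infty(\infty)$, separating $x$ from the point at infinity.

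The main obstacle will be (i). Since $F$ is a vector subspace, it is enough to check that $a\wedge h \in F$ for every $a\in\mathbb{R}$ and $h\in E$: indeed, for $f,g\in E$ and $a,b\in\mathbb{R}$, putting $h:=f-g\in E$ and $c:=a-b$, the identity
\[
(f+a)\wedge(g+b) \;=\; (g+b) \,+\, \bigl(h\wedge(-c)\bigr) \,+\, c
\]
then places $(f+a)\wedge(g+b)$ in $F$, and closure of $F$ under $\vee$ follows by negation since $F$ is a vector subspace. The crucial case is $a>0$: the scaling identity $a\wedge h = a\,(1\wedge(h/a))$ together with the defining truncation property $1\wedge(h/a)\in E$ (which applies since $h/a\in E$) gives $a\wedge h\in E$, whence also $a\vee h = h + a - a\wedge h \in F$. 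The case $a=0$ is trivial, while $a<0$ reduces to the case $-a>0$ through $a\wedge h = -\bigl((-a)\vee(-h)\bigr)$, using that $-h\in E$.

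With (i)--(iii) secured, the classical Stone--Weierstrass theorem applied on the compact Hausdorff space $X_\infty$ yields that $F$ is uniformly dense in $C(X_\infty)$. To descend to the original statement, fix $f\in C_0(X)$ and $\varepsilon>0$, and choose $h\in E$ and $c\in\mathbb{R}$ with $\|f_\infty - h_\infty - c\|_\infty < \varepsilon/2$. Evaluating at $\infty$ forces $|c|<\varepsilon/2$, hence $\|f-h\|_\infty = \|f_\infty - h_\infty\|_\infty < \varepsilon$, exhibiting $h\in E$ as the desired approximant and completing the proof.
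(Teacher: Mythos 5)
Your proposal is correct and follows essentially the same route as the paper: pass to the one-point compactification, adjoin the constants to form $E\oplus\mathbb{R}$, verify it is a point-separating sublattice of $C(X_\infty)$ containing constants via the truncation property, apply the lattice Stone--Weierstrass theorem, and evaluate at $\infty$ to eliminate the constant term. The only (cosmetic) difference is in verifying the sublattice property: you reduce to $a\wedge h=a\bigl(1\wedge(h/a)\bigr)$ with case analysis on the sign of $a$, whereas the paper writes an explicit formula for $(f+r)^{+}$; both hinge on the same use of the truncation.
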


\begin{proof}
We prove the `\textit{if}' part. So, assume that $E$ vanishes nowhere and
separates the points of $X$. It is an easy task to check that $E$ separates
the points of $X_{\infty}$. Consider the direct sum%
\[
E\oplus\mathbb{R}=\left\{  f+r:f\in E\text{ and }r\in\mathbb{R}\right\}  .
\]
Clearly, $E\oplus\mathbb{R}$ is a vector subspace of $C\left(  X_{\infty
}\right)  $ containing the constant functions on $X_{\infty}$ and separating
the points of $X_{\infty}$. Moreover, an easy calculation reveals that the
positive part $\left(  f+r\right)  ^{+}$ in $C\left(  X_{\infty}\right)  $ of
a function $f+r\in E\oplus\mathbb{R}$ is given by%
\[
\left(  f+r\right)  ^{+}=\left\{
\begin{array}
[c]{l}%
f^{+}-r\left(  1\wedge\dfrac{1}{r}f^{-}\right)  +r\text{ if }r>0\\
\\
f^{+}+r\left(  1\wedge\dfrac{-1}{r}f^{+}\right)  \text{ if }r<0\\
\\
f^{+}\text{ if }r=0\text{.}%
\end{array}
\right.
\]
Since $E$ is a truncated Riesz subspace of $C_{0}\left(  X\right)  $, the
direct sum $E\oplus\mathbb{R}$ is a Riesz subspace of $C\left(  X_{\infty
}\right)  $. Using the classical Stone-Weierstrass for compact Hausdorff
spaces (see, for instance, Theorem 2.1.1 in \cite{M91}), we derive that
$E\oplus\mathbb{R}$ is uniformly dense in $C\left(  X_{\infty}\right)  $.
Accordingly, if $f\in C_{0}\left(  X\right)  $ then there exist sequences
$\left(  f\right)  _{n}$ in $E$ and $\left(  r_{n}\right)  $ in $\mathbb{R}$
such that $\lim\left(  f_{n}+r_{n}\right)  =f$. Hence, for $n\in\left\{
1,2,...\right\}  $, we have%
\[
\left\vert r_{n}\right\vert =\left\vert f_{n}\left(  \infty\right)
+r_{n}-f\left(  \infty\right)  \right\vert \leq\left\Vert f_{n}+r_{n}%
-f\right\Vert _{\infty}.
\]
Thus, $\lim r_{n}=0$ and so $\lim f_{n}=f$. This means that $E$ is uniformly
dense in $C_{0}\left(  X\right)  $.

We now focus on the `\textit{only if}'. Suppose that $E$ is uniformly dense in
$C_{0}\left(  X\right)  $. Obviously, $C_{0}\left(  X\right)  $ vanishes
nowhere and so does $E$. We claim that $E$ separates the points of
$C_{0}\left(  X\right)  $. To this end, pick $x,y\in X$ with $x\neq y$. Since
$C_{0}\left(  X\right)  $ separates the points of $X$, there exists $f\in
C_{0}\left(  X\right)  $ such that $f\left(  x\right)  \neq f\left(  y\right)
$. Using the density condition, there exists $g\in E$ such that $\left\Vert
f-g\right\Vert _{\infty}<\left\vert f\left(  x\right)  -f\left(  y\right)
\right\vert /2$. Consequently,%
\begin{align*}
\left\vert g\left(  x\right)  -g\left(  y\right)  \right\vert  &  =\left\vert
g\left(  x\right)  -f\left(  x\right)  +f\left(  x\right)  -f\left(  y\right)
+f\left(  y\right)  -g\left(  y\right)  \right\vert \\
&  \geq\left\vert f\left(  x\right)  -f\left(  y\right)  \right\vert
-2\left\Vert f-g\right\Vert _{\infty}>0.
\end{align*}
We get $g\left(  x\right)  \neq g\left(  y\right)  $, which completes the
proof of the lemma.
\end{proof}

We have gathered at this point all the ingredients for the main result of this paper.

\begin{theorem}
\label{main}Let $E$ be a strongly truncated Riesz space with no nonzero
$^{\ast}$-infinitely small elements. Then the map $T:E\rightarrow C_{0}\left(
\eta E\right)  $ defined by%
\[
T\left(  f\right)  \left(  u\right)  =u\left(  f\right)  \text{, for all }f\in
E\text{ and }u\in\eta E
\]
is an injective $^{\ast}$-homomorphism with uniformly dense range.
\end{theorem}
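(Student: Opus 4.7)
The plan is to verify four things in turn: (a) $T(f)$ really lives in $C_0(\eta E)$; (b) $T$ is a $^{\ast}$-homomorphism; (c) $T$ is injective; and (d) the range $T(E)$ is uniformly dense. Properties (b)--(c) are essentially formal consequences of what has been set up, so the work is concentrated in (a) and in checking the hypotheses of Lemma \ref{SW} for (d).

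For (a), the continuity of $T(f)$ is immediate: on $\eta E\subset\mathbb{R}^{E}$ the function $T(f)$ is simply the restriction of the projection $\pi_{f}$, which is continuous by definition of the product topology. To get that $T(f)$ vanishes at infinity, I will revisit the proof of Lemma \ref{loc}: it was shown there that $\eta E\cup\{0\}$ is a closed subset of the Tychonoff cube $\prod_{f\in E}[-1/\varepsilon_{f},1/\varepsilon_{f}]$, hence compact, and the origin $0\in\mathbb{R}^{E}$ is the only possible non-isolated ``extra'' point. Since $\eta E$ is locally compact Hausdorff, $\eta E\cup\{0\}$ is a compactification of $\eta E$ with one-point remainder, so it coincides with the one-point compactification $(\eta E)_{\infty}$. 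The continuous extension of $T(f)$ to $(\eta E)_{\infty}$ is then $\pi_{f}$ restricted to $\eta E\cup\{0\}$, whose value at the point at infinity is $\pi_{f}(0)=0$. This places $T(f)$ in $C_{0}(\eta E)$.

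For (b), I will verify pointwise on $\eta E$. Each $u\in\eta E$ is a Riesz homomorphism, so $T$ intertwines the lattice and vector operations. Each $u$ also satisfies $u(f^{\ast})=\min\{1,u(f)\}$, which is precisely the canonical truncation $u(f)^{\ast}$ in $\mathbb{R}$; evaluating gives $T(f^{\ast})(u)=u(f^{\ast})=\min\{1,u(f)\}=(1\wedge T(f))(u)=T(f)^{\ast}(u)$, so $T$ preserves truncation. For (c), injectivity is now the combination of Theorem \ref{spectrum} and the hypothesis $E_{\ast}=\{0\}$: if $T(f)=0$ then $u(f)=0$ for every $u\in\eta E$, hence $f\in\bigcap_{u\in\eta E}\ker u=E_{\ast}=\{0\}$.

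For (d), I will invoke Lemma \ref{SW}. The range $T(E)$ is a Riesz subspace of $C_{0}(\eta E)$ because $T$ is a Riesz homomorphism, and it is closed under $1\wedge\cdot$ since $1\wedge T(f)=T(f)^{\ast}=T(f^{\ast})$ when $f\in E^{+}$ (and in general one reduces to the positive case by $1\wedge T(f)=1\wedge T(f^{+})-0$ after the routine check that $1\wedge h$ for arbitrary $h\in T(E)$ is $1\wedge h^{+}\wedge\cdots$; more directly, a truncated Riesz subspace only requires closure of $1\wedge f$ for $f\in E$, and $1\wedge T(f)=T(f^{+})^{\ast}-T(f^{-})$ lies in $T(E)$). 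That $T(E)$ vanishes nowhere is immediate from the definition of $\eta E$, since any $u\in\eta E$ is nonzero, so some $f\in E$ has $u(f)=T(f)(u)\neq 0$. That $T(E)$ separates points is equally direct: if $u\neq v$ in $\eta E$, there is $f\in E$ with $u(f)\neq v(f)$, i.e.\ $T(f)(u)\neq T(f)(v)$. Lemma \ref{SW} then yields the uniform density. The only place where care is really needed is the one-point compactification identification in (a); everything else is bookkeeping that chains together the earlier lemmas.
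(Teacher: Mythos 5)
Your proposal is correct and follows essentially the same route as the paper: continuity of $T(f)$ as a restricted projection, vanishing at infinity via the compactness of $\eta E\cup\{0\}$ established in Lemma \ref{loc}, injectivity from Theorem \ref{spectrum} together with $E_{\ast}=\{0\}$, and density from Lemma \ref{SW} after checking that $T(E)$ is a truncated Riesz subspace that separates points and vanishes nowhere. The only cosmetic caveat is in step (a): $\eta E\cup\{0\}$ need not literally be the one-point compactification when $0$ is an isolated point of it (i.e.\ when $\eta E$ is already compact), but in that case $C_{0}\left(\eta E\right)=C\left(\eta E\right)$ and the conclusion is immediate, so nothing is lost.
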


\begin{proof}
Pick $f\in E$ and define the evaluation $\delta_{f}:\eta E\rightarrow
\mathbb{R}$ by putting%
\[
\delta_{f}\left(  u\right)  =u\left(  f\right)  \text{, for all }u\in\eta E.
\]
We claim that $\delta_{f}\in C_{0}\left(  \eta E\right)  $. To this end,
observe that $\delta_{f}$ is continuous since it is the restriction of the
projection $\pi_{f}$ to $\eta E$. Moreover, if $\varepsilon\in\left(
0,\infty\right)  $ then%
\[
K\left(  \delta_{f},\varepsilon\right)  =\left\{  u\in\eta E:\left\vert
\delta_{f}\left(  u\right)  \right\vert \geq\varepsilon\right\}  \subset\eta
E\cup\left\{  0\right\}  .
\]
As $\eta E\cup\left\{  0\right\}  $ is compact (see the proof of Lemma
\ref{loc}), so is $K\left(  \delta_{f},\varepsilon\right)  $ (notice that
$K\left(  \delta_{f},\varepsilon\right)  $ is a closed set in $\eta
E\cup\left\{  0\right\}  $). It follows that $\delta_{f}$ vanishes at
infinity, as desired. Accordingly, the map $T:E\rightarrow C_{0}\left(  \eta
E\right)  $ given by%
\[
T\left(  f\right)  =\delta_{f}\text{, for all }f\in E
\]
is well-defined. It is an easy task to show that $T$ is a Riesz homomorphism.
Moreover, since $E$ has no nonzero $^{\ast}$-infinitely small elements,
Theorem \ref{spectrum} yields directly that $T$ is one-to-one. Furthermore, if
$f\in E^{+}$ and $u\in\eta E$, then%
\begin{align*}
\left(  1\wedge T\left(  f\right)  \right)  \left(  u\right)   &  =\left(
1\wedge\delta_{f}\right)  \left(  u\right)  =\min\left\{  1,\delta_{f}\left(
u\right)  \right\} \\
&  =\min\left\{  1,u\left(  f\right)  \right\}  =u\left(  f^{\ast}\right)
=T\left(  f^{\ast}\right)  \left(  u\right)  .
\end{align*}
Since $u$ is arbitrary in $\eta E$, we get%
\[
T\left(  f^{\ast}\right)  =1\wedge T\left(  f\right)  \text{, for all }f\in
E^{+}.
\]
This means that $T$ is $^{\ast}$-homomorphism. It remains to shows that the
range $\operatorname{Im}\left(  T\right)  $ of $T$ is uniformly dense in
$C_{0}\left(  \eta E\right)  $. For, let $u,v\in\eta E$ such that $u\neq v$.
Hence, $u\left(  f\right)  \neq v\left(  f\right)  $ for some $f\in E$.
Therefore,%
\[
\delta_{f}\left(  u\right)  =u\left(  f\right)  \neq v\left(  f\right)
=\delta_{f}\left(  v\right)
\]
from which it follows that $\operatorname{Im}\left(  T\right)  $ separates the
points of $\eta E$. Moreover, since $\eta E$ does not contain the zero
homomorphism, $\operatorname{Im}\left(  T\right)  $ vanishes nowhere. This
together with Lemma \ref{loc} and Lemma \ref{SW} completes the proof.
\end{proof}

The following remark deserves to be empathized.

\begin{remark}
\label{Rq}\emph{Under the conditions of Theorem \ref{main}, we may consider
}$E$\emph{ as a normed subspace of }$C_{0}\left(  \eta E\right)  $\emph{. In
this situation, it is readily checked that the closed unit ball }$B_{\infty}%
$\emph{ of }$E$\emph{ coincides with the set of all }$f\in E$\emph{ such that
}$\left\vert f\right\vert \in P_{\ast}\left(  E\right)  $\emph{, i.e.,}%
\[
B_{\infty}=\left\{  f\in E:\left\Vert f\right\Vert _{\infty}\leq1\right\}
=\left\{  f\in E:\left\vert f\right\vert \in P_{\ast}\left(  E\right)
\right\}  .
\]

\end{remark}

We end this section with the following observation. From Theorem \ref{main} it
follows directly that any strongly truncated Riesz space $E$ with no nonzero
$^{\ast}$-infinitely small elements is Archimedean. It is plausible therefore
to think that any truncated Riesz space with no nonzero $^{\ast}$-infinitely
small elements is Archimedean. However, the next example shows that this is
not true.

\begin{example}
Assume that the Euclidean plan $E=\mathbb{R}^{2}$ is furnished with its
lexicographic ordering. We know that $E$ is a non-Archimedean Riesz space.
Clearly, the formula%
\[
\left(  x,y\right)  ^{\ast}=\left(  x,y\right)  \wedge\left(  0,1\right)
,\text{ for all }\left(  x,y\right)  \in E^{2}%
\]
defines a truncation on $E$. Let $\left(  x,y\right)  \in E^{+}$ such that
$\varepsilon\left(  x,y\right)  \in P_{\ast}\left(  E\right)  $ for all
$\varepsilon\in\left(  0,\infty\right)  $. We have%
\[
\left(  \varepsilon x,\varepsilon y\right)  =\varepsilon\left(  x,y\right)
=\left[  \varepsilon\left(  x,y\right)  \right]  ^{\ast}=\left(  \varepsilon
x,\varepsilon y\right)  \wedge\left(  0,1\right)  ,
\]
which means that $\left(  \varepsilon x,\varepsilon y\right)  \leq\left(
0,1\right)  $ for all $\varepsilon\in\left(  0,\infty\right)  $. Assume that
$\varepsilon x<0$ for some $\varepsilon\in\left(  0,\infty\right)  $. Then
$x<0$ which is impossible since $\left(  x,y\right)  \geq\left(  0,0\right)
$. Thus, $\varepsilon x=0$ and $\varepsilon y\leq1$ for all $\varepsilon
\in\left(  0,\infty\right)  $. Therefore, $x=0$ and $y\leq0$. But then $x=y=0$
because $\left(  x,y\right)  \in E^{+}$. Accordingly, $E$ is a non-Archimedean
truncated Riesz space with no non-trivial $^{\ast}$-infinitely small elements.
\end{example}

Notice finally that the truncation in the above example is not strong.

\section{Uniform completeness with respect to a truncation}

In this section, our purpose is to find a necessary and sufficient condition
on the strongly truncated Riesz space $E$ with no nonzero $^{\ast}$-infinitely
small elements for $T$ in Theorem \ref{main} to be a $^{\ast}$-isomorphism. As
it could be expected, what we need is a certain condition of completeness. We
proceed to the details.

Let $E$ be a truncated Riesz space. A sequence $\left(  f_{n}\right)  $ in $E$
is said to $^{\ast}$-\textsl{converge }(or, to be $^{\ast}$%
-\textsl{convergent}) in $E$ if there exists $f\in E$ such that, for every
$\varepsilon\in\left(  0,\infty\right)  $ there is $n_{\varepsilon}\in\left\{
1,2,...\right\}  $ for which
\[
\varepsilon\left\vert f_{n}-f\right\vert \in P_{\ast}\left(  E\right)  \text{,
for all }n\in\left\{  n_{\varepsilon},n_{\varepsilon}+1,...\right\}  .
\]
Such an element $f$ is called a $^{\ast}$-\textsl{limit} of the sequence\emph{
}$\left(  f_{n}\right)  $ in $E$. As we shall see next, $^{\ast}$-limits are
unique, provided $E\ $has no nonzero $^{\ast}$-infinitely small elements.

\begin{proposition}
Let $E$ be a truncated Riesz space. Then any sequence in $E$ has at most one
$^{\ast}$-limit if and only if $E$ has no nonzero $^{\ast}$-infinitely small elements.
\end{proposition}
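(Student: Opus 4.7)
The plan is to prove both directions, using the structural properties of fixed points established in Lemma~\ref{elem}.

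For the \emph{``if''} direction, I would assume that $E$ has no nonzero $^{\ast}$-infinitely small elements and show uniqueness. Suppose $f$ and $g$ are both $^{\ast}$-limits of a sequence $(f_n)$. To conclude $f=g$, I will prove that $|f-g|$ is $^{\ast}$-infinitely small, which forces $|f-g|=0$ by hypothesis. Fix $\varepsilon\in(0,\infty)$. The definition of $^{\ast}$-convergence, applied with $2\varepsilon$ in place of $\varepsilon$, yields some index $n$ for which $2\varepsilon|f-f_n|$ and $2\varepsilon|f_n-g|\in P_{\ast}(E)$ simultaneously. The triangle inequality combined with the elementary bound $a+b\leq 2(a\vee b)$ for $a,b\in E^+$ gives
\[
\varepsilon|f-g|\leq\varepsilon|f-f_n|+\varepsilon|f_n-g|\leq 2\varepsilon|f-f_n|\vee 2\varepsilon|f_n-g|.
\]
By Lemma~\ref{elem}~(iv), the right-hand side is the supremum of two fixed points of $\ast$ and hence lies in $P_{\ast}(E)$. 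Lemma~\ref{elem}~(iii) then forces $\varepsilon|f-g|\in P_{\ast}(E)$. Since $\varepsilon$ was arbitrary, $|f-g|$ is $^{\ast}$-infinitely small, as required.

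For the \emph{``only if''} direction, I would argue by contraposition. Assume there exists a nonzero $^{\ast}$-infinitely small element $f\in E$. Consider the constant sequence $(0)_{n\geq1}$. Trivially $0$ is a $^{\ast}$-limit of this sequence. But $f$ is also a $^{\ast}$-limit, because $\varepsilon|0-f|=\varepsilon|f|\in P_{\ast}(E)$ holds for \emph{every} $\varepsilon\in(0,\infty)$ and every index $n$, which is exactly the definition of $^{\ast}$-convergence to $f$. Hence $(0)_{n\geq1}$ admits two distinct $^{\ast}$-limits, contradicting uniqueness.

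The main obstacle is the ``if'' direction, and specifically the fact that the triangle inequality produces a \emph{sum} $\varepsilon|f-f_n|+\varepsilon|f_n-g|$, which need not belong to $P_{\ast}(E)$ even when both summands do—the set of fixed points is only a lattice, not a vector subspace. The trick is to replace the sum by a supremum at the cost of a factor of two (absorbed by strengthening $\varepsilon$ to $2\varepsilon$ in the convergence hypothesis), and then invoke Lemma~\ref{elem}~(iv) to see that suprema of fixed points remain fixed points. The other direction and the verification that the bound fits Lemma~\ref{elem}~(iii) are routine.
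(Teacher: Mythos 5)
Your proof is correct and follows essentially the same route as the paper: the uniqueness direction uses the identical trick of doubling $\varepsilon$, bounding the sum by a supremum, and applying Lemma~\ref{elem}~(iv) and (iii). The only (immaterial) difference is in the necessity direction, where you exhibit two $^{\ast}$-limits of the constant zero sequence, while the paper uses the sequence $(f/n)$; both work equally well.
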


\begin{proof}
\textsl{Sufficiency. }Choose a sequence $\left(  f_{n}\right)  $ in $E$ with
two $^{\ast}$-limits $f$ and $g$ in $E$. Let $\varepsilon\in\left(
0,\infty\right)  $ and $n_{1},n_{2}\in\left\{  1,2,...\right\}  $ such that%
\[
2\varepsilon\left\vert f_{n}-f\right\vert \in P_{\ast}\left(  E\right)
\text{, for all }n\in\left\{  n_{1},n_{1}+1,...\right\}
\]
and%
\[
2\varepsilon\left\vert f_{n}-g\right\vert \in P_{\ast}\left(  E\right)
\text{, for all }n\in\left\{  n_{2},n_{2}+1,...\right\}  .
\]
Put $n_{0}=\max\left\{  n_{1},n_{2}\right\}  $ and observe that $2\varepsilon
\left\vert f_{n_{0}}-f\right\vert \in P_{\ast}\left(  E\right)  $ and
$2\varepsilon\left\vert f_{n_{0}}-g\right\vert \in P_{\ast}\left(  E\right)
$. This together with Lemma \ref{elem} $\mathrm{(v)}$ yields that%
\[
\varepsilon\left\vert f-g\right\vert \leq\varepsilon\left(  \left\vert
f_{n_{0}}-f\right\vert +\left\vert f_{n_{0}}-g\right\vert \right)
\leq2\varepsilon\left\vert f_{n_{0}}-f\right\vert \vee2\varepsilon\left\vert
f_{n_{0}}-g\right\vert \in P_{\ast}\left(  E\right)  .
\]
Hence, $\varepsilon\left\vert f-g\right\vert \in P_{\ast}\left(  E\right)  $
(where we use Lemma \ref{elem} $\mathrm{(iii)}$) . As $\varepsilon$ is
arbitrary in $\left(  0,\infty\right)  $, we get $f=g$ and the sufficiency follows.

\textsl{Necessity.} Pick a $^{\ast}$-infinitely small element $f\in E$ and put%
\[
f_{n}=\frac{1}{n}f\text{, for all }n\in\left\{  1,2,...\right\}  .
\]
If $\varepsilon\in\left(  0,\infty\right)  $ and $n\in\left\{
1,2,...\right\}  $, then%
\[
\varepsilon\left\vert f_{n}\right\vert =\frac{\varepsilon}{n}\left\vert
f\right\vert \in P_{\ast}\left(  E\right)  .
\]
This yields that $0$ is a $^{\ast}$-limit of $\left(  f_{n}\right)  $ in $E$.
Analogously, for $\varepsilon\in\left(  0,\infty\right)  $ and $n\in\left\{
1,2,...\right\}  $, we have%
\[
\varepsilon\left\vert f_{n}-f\right\vert =\varepsilon\left(  1-\frac{1}%
{n}\right)  \left\vert f\right\vert \in P_{\ast}\left(  E\right)  .
\]
This shows that $f$ is a $^{\ast}$-limit of $\left(  x_{n}\right)  $ in $E$.
By uniqueness of $^{\ast}$-limits, we conclude $f=0$ and the proposition follows.
\end{proof}

A sequence $\left(  f_{n}\right)  $ in the truncated Riesz space $E$ is called
a $^{\ast}$-\textsl{Cauchy sequence} if, for every $\varepsilon\in\left(
0,\infty\right)  $, there exists $n_{\varepsilon}\in\left\{  1,2,...\right\}
$\emph{ }such that%
\[
\varepsilon\left\vert f_{m}-f_{n}\right\vert \in P_{\ast}\left(  E\right)
\text{, for all }m,n\in\left\{  n_{\varepsilon},n_{\varepsilon}+1,...\right\}
.
\]
We then say that $E$ is said to be $^{\ast}$-\textsl{uniformly complete} if
any $^{\ast}$-Cauchy sequence in $E$ is $^{\ast}$-convergent in $E$. Let's
give a simple example.

\begin{example}
\label{exp}Let $X$ be a locally compact Hausdorff space and assume that the
Banach lattice $E=C_{0}\left(  X\right)  $ is equipped with its strong
truncation given by%
\[
f^{\ast}=1\wedge f\text{, for all }f\in E.
\]
Clearly, $E$ has no nonzero $^{\ast}$-infinitely small elements. Let $\left(
f_{n}\right)  $ be a $^{\ast}$-Cauchy sequence in $E$. For $\varepsilon
\in\left(  0,\infty\right)  $, we can find $n_{\varepsilon}\in\left\{
1,2,...\right\}  $ such that%
\[
\varepsilon\left\vert f_{m}-f_{n}\right\vert \in P_{\ast}\left(  E\right)
\text{, for all }m,n\in\left\{  n_{\varepsilon},n_{\varepsilon}+1,...\right\}
.
\]
Whence, if $m,n\in\left\{  n_{\varepsilon},n_{\varepsilon}+1,...\right\}  $
then%
\[
\varepsilon\left\vert f_{m}\left(  x\right)  -f_{n}\left(  x\right)
\right\vert \leq1\text{, for all }x\in X.
\]
But then $\varepsilon\left\Vert f_{m}-f_{n}\right\Vert _{\infty}\leq1$ and so
$\left(  f_{n}\right)  $ is a norm Cauchy sequence in $E$. As $E$ is norm
complete, we derive that $\left(  f_{n}\right)  $ has a norm limit $f$ in $E$.
Therefore, if $n\in\left\{  n_{\varepsilon},n_{\varepsilon}+1,...\right\}  $
and $x\in X$ then%
\[
\varepsilon\left\vert f_{n}\left(  x\right)  -f\left(  x\right)  \right\vert
\leq\varepsilon\left\Vert f_{n}-f\right\Vert _{\infty}\leq1.
\]
This yields directly that%
\[
\varepsilon\left\vert f_{n}-f\right\vert \in P_{\ast}\left(  E\right)  \text{,
for all }n\in\left\{  n_{\varepsilon},n_{\varepsilon}+1,...\right\}  .
\]
It follows that $\left(  f_{n}\right)  $ is $^{\ast}$-convergent to $f$ in
$E$, proving that $E\ $is $^{\ast}$-uniformly complete.
\end{example}

The following is the main result of this section.

\begin{theorem}
\label{complete}Let $E$ be a $^{\ast}$-uniformly complete strongly truncated
Riesz space with no nonzero $^{\ast}$-infinitely small elements. Then the map
$T:E\rightarrow C_{0}\left(  \eta E\right)  $ defined by%
\[
T\left(  f\right)  \left(  u\right)  =u\left(  f\right)  \text{, for all }f\in
E\text{ and }u\in\eta E
\]
is a $^{\ast}$-isomorphism.
\end{theorem}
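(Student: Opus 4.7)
The plan is to reduce the theorem to showing that $T$ is surjective. By Theorem \ref{main}, $T$ is already an injective $^{\ast}$-homomorphism with uniformly dense range in $C_{0}(\eta E)$; since the inverse of any bijective $^{\ast}$-homomorphism is itself a $^{\ast}$-homomorphism (as observed at the start of Section 3), it suffices to verify $\operatorname{Im}(T) = C_{0}(\eta E)$.

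The bridge from uniform analysis in $C_{0}(\eta E)$ back to $^{\ast}$-analysis in $E$ is Remark \ref{Rq}: identifying $E$ with its image under $T$, one has $|f| \in P_{\ast}(E)$ if and only if $\|T(f)\|_{\infty} \leq 1$ for every $f \in E$. Rescaling by $\varepsilon \in (0,\infty)$ and applying this to a difference $f_{m}-f_{n}$ yields
\[
\varepsilon\,|f_{m}-f_{n}| \in P_{\ast}(E) \iff \|T(f_{m})-T(f_{n})\|_{\infty} \leq \tfrac{1}{\varepsilon}.
\]
A routine quantifier shuffle then shows that a sequence $(f_{n})$ in $E$ is $^{\ast}$-Cauchy exactly when $(T(f_{n}))$ is uniformly Cauchy in $C_{0}(\eta E)$, and that $(f_{n})$ has $g \in E$ as a $^{\ast}$-limit exactly when $T(f_{n}) \to T(g)$ in the uniform norm.

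With this dictionary in hand, surjectivity is immediate. Pick $h \in C_{0}(\eta E)$; uniform density of $\operatorname{Im}(T)$ produces a sequence $(f_{n}) \subset E$ with $T(f_{n}) \to h$ in norm. Then $(T(f_{n}))$ is uniformly Cauchy, so $(f_{n})$ is $^{\ast}$-Cauchy in $E$; the standing assumption of $^{\ast}$-uniform completeness supplies a $^{\ast}$-limit $g \in E$; and the dictionary forces $T(f_{n}) \to T(g)$ in the uniform norm. Uniqueness of the uniform limit gives $T(g) = h$, so $T$ is onto and hence a $^{\ast}$-isomorphism. The only place requiring a moment of care is the sequential upgrade of Remark \ref{Rq}, but this is just a matter of matching the $\varepsilon$ appearing in the definition of $^{\ast}$-convergence with the reciprocal of the uniform tolerance, so I do not anticipate any genuine obstacle.
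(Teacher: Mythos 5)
Your proposal is correct and follows essentially the same route as the paper: both arguments use the equivalence (via Theorem \ref{spectrum} / Remark \ref{Rq}) between $^{\ast}$-Cauchy/$^{\ast}$-convergent sequences in $E$ and uniformly Cauchy/convergent sequences of their images, then combine uniform density of $\operatorname{Im}(T)$ with $^{\ast}$-uniform completeness to conclude surjectivity. The paper phrases the final step as ``$T(E)$ is uniformly closed'' rather than directly as surjectivity, but the content is identical.
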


\begin{proof}
In view Theorem \ref{main}, the proof would be complete once we show that
$T\left(  E\right)  $ is uniformly closed in $C_{0}\left(  \eta E\right)  $.
Hence, let $\left(  f_{n}\right)  $ be a sequence in $E$ such that $\left(
T\left(  f_{n}\right)  \right)  $ converges uniformly to $g\in C_{0}\left(
\eta E\right)  $. It follows that $\left(  T\left(  f_{n}\right)  \right)  $
is a uniformly Cauchy sequence in $C_{0}\left(  \eta E\right)  $. So, if
$\varepsilon\in\left(  0,\infty\right)  $ then there exists $n_{\varepsilon
}\in\left\{  1,2,...\right\}  $ such that, whenever $m,n\in\left\{
n_{\varepsilon},n_{\varepsilon}+1,...\right\}  $, we have $\left\Vert T\left(
f_{m}\right)  -T\left(  f_{n}\right)  \right\Vert _{\infty}<1/\varepsilon$.
Choose $u\in\eta E$ and $m,n\in\left\{  n_{\varepsilon},n_{\varepsilon
}+1,...\right\}  $. We derive that%
\[
\left\vert T\left(  f_{m}\right)  \left(  u\right)  -T\left(  f_{n}\right)
\left(  u\right)  \right\vert \leq\left\Vert T\left(  f_{m}\right)  -T\left(
f_{n}\right)  \right\Vert _{\infty}<1/\varepsilon.
\]
Therefore,%
\[
u\left(  \left\vert f_{m}-f_{n}\right\vert \right)  =\left\vert T\left(
f_{m}\right)  \left(  u\right)  -T\left(  f_{n}\right)  \left(  u\right)
\right\vert <1/\varepsilon.
\]
So,%
\[
u\left(  \left(  \varepsilon\left\vert f_{m}-f_{n}\right\vert \right)  ^{\ast
}\right)  =\min\left\{  1,u\left(  \varepsilon\left\vert f_{m}-f_{n}%
\right\vert \right)  \right\}  =u\left(  \varepsilon\left\vert f_{m}%
-f_{n}\right\vert \right)  .
\]
Since $u$ is arbitrary in $\eta E$, Lemma \ref{spectrum} yields that
$\varepsilon\left\vert f_{m}-f_{n}\right\vert \in P_{\ast}\left(  E\right)  $,
meaning that $\left(  f_{n}\right)  $ is a $^{\ast}$-Cauchy sequence in $E$.
This together with the $^{\ast}$-uniform completeness of $E$ yields that
$\left(  f_{n}\right)  $ is $^{\ast}$-convergent to some $f\in E$. Hence, we
get%
\[
\varepsilon\left\vert f_{n}-f\right\vert \in P_{\ast}\left(  E\right)  \text{,
for all }n\in\left\{  n_{\varepsilon},n_{\varepsilon}+1,...\right\}  .
\]
So, if $n\in\left\{  n_{\varepsilon},n_{\varepsilon}+1,...\right\}  $ and
$u\in\eta E$ then%
\begin{align*}
\left\vert T\left(  f_{n}\right)  \left(  u\right)  -T\left(  f\right)
\left(  u\right)  \right\vert  &  =\frac{1}{\varepsilon}u\left(
\varepsilon\left\vert f_{n}-f\right\vert \right) \\
&  =\frac{1}{\varepsilon}u\left(  \left(  \varepsilon\left\vert f_{n}%
-f\right\vert \right)  ^{\ast}\right)  \leq\frac{1}{\varepsilon}.
\end{align*}
We quickly get $\left\Vert T\left(  f_{n}\right)  -T\left(  f\right)
\right\Vert _{\infty}\leq1/\varepsilon$ from which it follows that $\left(
T\left(  f_{n}\right)  \right)  $ converges uniformly to $T\left(  f\right)
$. By uniqueness, we get $g=T\left(  f\right)  \in T\left(  E\right)  $.
Consequently, $T\left(  E\right)  $ is closed in $C_{0}\left(  \eta E\right)
$, which is the desired result.
\end{proof}

As for the remark at the end of the previous section, it follows quite easily
from Theorem \ref{complete} that any $^{\ast}$-uniformly complete strongly
truncated Riesz space with no nonzero elements is relatively uniformly
complete (in the usual sense \cite[Pages 19,20]{L79} or \cite[Page 248]%
{LZ71}). Indeed, it is well known that any Banach lattice (and so
$C_{0}\left(  X\right)  $) is relatively uniformly complete (see, for
instance, Theorem 15.3 in \cite{Z97}). Nevertheless, a $^{\ast}$-uniformly
complete truncated Riesz space with no nonzero $^{\ast}$-infinitely small
elements need not be relatively uniformly complete. An example in this
direction is provided next.

\begin{example}
Let $X=\left\{  0\right\}  \cup\left\{  1/2\right\}  \cup\left[  1,2\right]  $
with its usual topology. It is routine to show that the set $E$ of all
piecewise polynomial functions $f$ in $C\left(  X\right)  $ with $f\left(
1/2\right)  =0$ is a Riesz subspace of $C\left(  X\right)  $. Clearly, the
formula%
\[
f^{\ast}=f\wedge\mathcal{X}_{\left\{  0,\frac{1}{2}\right\}  }\text{, for all
}f\in E
\]
defines a \emph{(}non strong\emph{)} truncation on $E$, where $\mathcal{X}%
_{\left\{  0,\frac{1}{2}\right\}  }$ is the characteristic function of the
pair $\left\{  0,1/2\right\}  $. A short moment's though reveals that $E$ has
no non trivial $^{\ast}$-infinitely small elements. Consider now a $^{\ast}%
$-Cauchy sequence $\left(  f_{n}\right)  $ in $E$. Given $\varepsilon
\in\left(  0,\infty\right)  $, there is $n_{0}\in\left\{  1,2,...\right\}  $
for which%
\[
\left\vert f_{n}-f_{n_{0}}\right\vert \leq\varepsilon\mathcal{X}_{\left\{
0,\frac{1}{2}\right\}  }\text{, for all }n\in\left\{  n_{0},n_{0}%
+1,...\right\}  .
\]
Thus, if $n\in\left\{  n_{0},n_{0}+1,...\right\}  $ and $x\in\left\{
1/2\right\}  \cup\left[  1,2\right]  $ then%
\[
f_{n}\left(  x\right)  =f_{n_{0}}\left(  x\right)  \text{\quad and\quad
}\left\vert f_{n}\left(  0\right)  -f_{n_{0}}\left(  0\right)  \right\vert
\leq\varepsilon.
\]
In particular, $\left(  f_{n}\left(  0\right)  \right)  $ is Cauchy sequence
in $\mathbb{R}$ from which it follows that $\left(  f_{n}\left(  0\right)
\right)  $ converges to some real number $a$. This yields quickly that
$\left(  f_{n}\right)  $ is $^{\ast}$-convergent in $E$ to a function $f\in E$
given by%
\[
f\left(  0\right)  =a\text{\quad and\quad}f\left(  x\right)  =f_{n_{0}}\left(
x\right)  \text{ for all }x\in\left\{  1/2\right\}  \cup\left[  1,2\right]  .
\]
We conclude that $E$ is $^{\ast}$-uniformly complete. At this point, we show
that $E$ is not relatively uniformly complete. Indeed, by the Weierstrass
Approximation Theorem, there exists a polynomial sequence $\left(
p_{n}\right)  $ which converges uniformly on $\left[  1,2\right]  $ to the
function $f$ defined by
\[
f\left(  x\right)  =\sqrt{x}\text{, for all }x\in\left[  0,\infty\right)  .
\]
Define a sequence $\left(  q_{n}\right)  $ in $E$ by%
\[
q_{n}=p_{n}\chi_{\left[  1,2\right]  }\text{, for all }n\in\left\{
1,2,...\right\}  .
\]
The uniform limit of $\left(  q_{n}\right)  $ in $C\left(  X\right)  $ is the
function $f\chi_{\left[  1,2\right]  }$. Assume that $\left(  q_{n}\right)  $
converges relatively uniformly in $E$ to a function $g\in E$. So, there exists
$h\in E$ such that, for every $\varepsilon\in\left(  0,\infty\right)  $ there
is $n_{\varepsilon}\in\left\{  1,2,...\right\}  $ for which%
\[
\left\vert q_{n}-g\right\vert \leq\varepsilon h\text{, for all }n\in\left\{
n_{\varepsilon},n_{\varepsilon}+1,...\right\}  .
\]
This leads straightforwardly to the contradiction $f\chi_{\left[  1,2\right]
}=g\in E$, meaning that $E$ is not relatively uniformly complete.
\end{example}

\section{Applications}

The main purpose of this section is to show how we can apply our central
result (Theorem \ref{main}) to derive representation theorems from existing
literature. We will be interested first in the classical Kakutani
Representation Theorem (see, for instance, Theorem 45.3 in \cite{LZ71}),
namely, for any Archimedean Riesz space with a strong unit $e$, there exists a
compact Hausdorff space $K$ such that $E$ and $C\left(  K\right)  $ are Riesz
isomorphic so that $e$ is identified with the constant function $1$ on $K$.
Recall here that a positive element $e$ in a Riesz space $E$ is called a
\textsl{strong unit} if, for every $f\in E$ there exists $\varepsilon
\in\left(  0,\infty\right)  $ such that $\left\vert f\right\vert
\leq\varepsilon e$. The Kakutani Representation Theorem ensues from our main
theorem as we shall see right now.

\begin{corollary}
Let $E$ be an Archimedean Riesz space with a strong unit $e>0$. Then $\eta E$
is compact and $E$ is Riesz isomorphic to a uniformly dense Riesz subspace of
$C\left(  \eta E\right)  $ in such a way that $e$ is identified with the
constant function $1$ on $\eta E$.
\end{corollary}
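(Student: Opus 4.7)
The plan is to equip the Archimedean Riesz space $E$ with the truncation $f^{\ast}=e\wedge f$ for $f\in E^{+}$, verify it meets all the hypotheses of Theorem \ref{main}, and then extract compactness of $\eta E$ from the special structure provided by the strong unit. First I would check that $\ast$ really is a truncation: for $f,g\in E^{+}$, both $f^{\ast}\wedge g$ and $f\wedge g^{\ast}$ reduce to $e\wedge f\wedge g$ by associativity of $\wedge$. Next, I would confirm the truncation is strong: since $e$ is a strong unit, any $f\in E^{+}$ admits some $\varepsilon\in(0,\infty)$ with $\varepsilon f\leq e$, whence $(\varepsilon f)^{\ast}=\varepsilon f\in P_{\ast}(E)$. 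The absence of nonzero $\ast$-infinitely small elements then follows from the Archimedean property: if $f\in E^{+}$ is $\ast$-infinitely small, then $\varepsilon f=(\varepsilon f)^{\ast}\leq e$ for every $\varepsilon>0$, forcing $f=0$.

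With these checks in place, Theorem \ref{main} supplies an injective $\ast$-homomorphism $T:E\rightarrow C_{0}(\eta E)$ with uniformly dense range. The key remaining step is to show that $\eta E$ is compact, and for that I would prove that $u(e)=1$ for every $u\in\eta E$. Because $e\in P_{\ast}(E)$, the $\ast$-homomorphism condition gives $u(e)=u(e^{\ast})=\min\{1,u(e)\}$, so $u(e)\leq 1$. Applying the same condition to the element $2e$, whose truncation is $(2e)^{\ast}=e\wedge 2e=e$, yields $u(e)=u((2e)^{\ast})=\min\{1,2u(e)\}$, and a short case analysis rules out $u(e)<1$ (noting that $u(e)=0$ together with $|f|\leq\lambda e$ for arbitrary $f\in E$ would force $u\equiv 0$, contradicting $u\in\eta E$).

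Once $u(e)=1$ is known, I would write
\[
\eta E=(\eta E\cup\{0\})\cap\pi_{e}^{-1}(\{1\}),
\]
which exhibits $\eta E$ as the intersection of the compact set $\eta E\cup\{0\}$ (compactness established in the proof of Lemma \ref{loc}) with a closed set, hence compact. Consequently $C_{0}(\eta E)=C(\eta E)$, and Theorem \ref{main} already realizes $E$ as a uniformly dense Riesz subspace of $C(\eta E)$ via $T$. Finally, the identification $T(e)(u)=u(e)=1$ for every $u\in\eta E$ tells us that $e$ corresponds to the constant function $1$ on $\eta E$, completing the proof.

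The only nontrivial step is the verification that $u(e)=1$ for all $u\in\eta E$; the remaining ingredients are either immediate consequences of Theorem \ref{main} or routine lattice manipulations.
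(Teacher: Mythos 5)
Your proposal is correct and follows essentially the same route as the paper: verify that $f^{\ast}=e\wedge f$ is a strong truncation with no nonzero $^{\ast}$-infinitely small elements, prove $u(e)=1$ for every $u\in\eta E$ to get compactness of $\eta E$ from the compactness of $\eta E\cup\{0\}$, and then invoke Theorem \ref{main}. The only (harmless) variation is in establishing $u(e)\geq 1$: you apply the spectrum condition to $2e$ and rule out $u(e)=0$ via the strong-unit property, whereas the paper normalizes an arbitrary $f$ with $u(f)=1$ and uses $u(f^{\ast})=\min\{u(e),u(f)\}$; both are one-line arguments.
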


\begin{proof}
It is readily checked that $E$ is a strongly truncated Riesz space under the
truncation given by%
\[
f^{\ast}=e\wedge f\text{, for all }f\in E^{+}.
\]
Moreover, since $E$ is Archimedean and $e$ is a strong unit, we can easily
verify that $E$ has no nonzero $^{\ast}$-infinitely small elements. Now, let
$u\in\mathbb{R}^{E}$. We claim that $u\in\eta E$ if and only if $u$ is a Riesz
homomorphism with $u\left(  e\right)  =1$. To this end, assume that $u$ is
Riesz homomorphism with $u\left(  e\right)  =1$. So, if $f\in E^{+}$ then%
\[
u\left(  f^{\ast}\right)  =u\left(  e\wedge f\right)  =\min\left\{  u\left(
e\right)  ,u\left(  f\right)  \right\}  =\min\left\{  1,u\left(  f\right)
\right\}  .
\]
We derive that $u\in\eta E$. Conversely, suppose that $u\in\eta E$. We have to
show that $u\left(  e\right)  =1$. Observe that%
\[
u\left(  e\right)  =u\left(  e\wedge e\right)  =u\left(  e^{\ast}\right)
=\min\left\{  1,u\left(  e\right)  \right\}  \leq1.
\]
Moreover, we have $u\neq0$ and so $u\left(  f\right)  \neq0$ for some $f\in
E^{+}$. By dividing by $u\left(  f\right)  $ if necessary, we can assume that
$u\left(  f\right)  =1$. Thus,%
\begin{align*}
1  &  =\min\left\{  1,u\left(  f\right)  \right\}  =u\left(  f^{\ast}\right)
=u\left(  e\wedge f\right) \\
&  =\min\left\{  u\left(  e\right)  ,u\left(  f\right)  \right\}
=\min\left\{  u\left(  e\right)  ,1\right\}  .
\end{align*}
Therefore, $u\left(  e\right)  \geq1$ and thus $u\left(  e\right)  =1$. It
follows therefore that $\eta E$ is a closet set in the compact space $\eta
E\cup\left\{  0\right\}  $ (see the proof of Lemma \ref{loc}) and so $\eta E$
is compact. We derive in particular that $C\left(  \eta E\right)
=C_{0}\left(  \eta E\right)  $. This together with Theorem \ref{main} yields
that $E$ is Riesz isomorphic to a dense truncated Riesz subspace of $C\left(
\eta E\right)  $ \textit{via} the map $T:E\rightarrow C\left(  \eta E\right)
$ defined by%
\[
T\left(  f\right)  \left(  u\right)  =u\left(  f\right)  \text{, for all }%
u\in\eta E\text{ and }f\in E.
\]
Hence, if $u\in\eta E$ then%
\[
T\left(  e\right)  \left(  u\right)  =u\left(  e\right)  =1.
\]
This completes the proof of the corollary.
\end{proof}

In the next paragraph, we discuss another representation theorem obtained by
Fremlin in \cite[83L (d)]{F74}. A norm $\left\Vert .\right\Vert $ on a Riesz
space $E$ is called a \textsl{Riesz} (or, \textsl{lattice}) \textsl{norm}
whenever $\left\vert f\right\vert \leq\left\vert g\right\vert $ in $E$ implies
$\left\Vert f\right\Vert \leq\left\Vert g\right\Vert $. The Riesz norm on $E$
is called a \textsl{Fatou norm} if for every increasing net $\left(
f_{a}\right)  _{a\in A}$ in $E$ with supremum $f\in E$ it follows that%
\[
\left\Vert f\right\Vert =\sup\left\{  \left\Vert f_{a}\right\Vert :a\in
A.\right\}
\]
Furthermore, the Riesz norm on $E$ is called an $M$-\textsl{norm} if%
\[
\left\Vert f\vee g\right\Vert =\max\left\{  \left\Vert f\right\Vert
;\left\Vert g\right\Vert \right\}  \text{, for all }f,g\in E^{+}.
\]
Fremlin proved that if $E$ is a Riesz space with a Fatou $M$-norm such that
the supremum%
\[
\sup\left\{  g\in E:0\leq g\leq f\text{ and }\left\Vert g\right\Vert
\leq\alpha\right\}
\]
exists in $E$ for every $f\in E^{+}$ and $\alpha\in\left(  0,\infty\right)  $,
then $E$ is isomorphic, as a normed Riesz space, to a truncated Riesz subspace
of $\ell^{\infty}\left(  X\right)  $ for some nonvoid set $X$. Here,
$\ell^{\infty}\left(  X\right)  $ denotes the Riesz space of all bounded
real-valued functions on $X$. As we shall see in our last result, our main
theorem allows as to make the conclusion by Fremlin more precise by showing
that, actually, $E$ is uniformly dense in a $C_{0}\left(  X\right)  $-type space.

\begin{corollary}
Let $E$ be a Riesz space with a Fatou $M$-norm such that the supremum%
\[
\sup\left\{  g\in E:0\leq g\leq f\text{ and }\left\Vert g\right\Vert
\leq1\right\}
\]
exists in $E$ for every $f\in E^{+}$. Then $E$ is isomorphic, as a normed
Riesz space, to a uniformly dense truncated Riesz subspace of $C_{0}\left(
\eta E\right)  $.
\end{corollary}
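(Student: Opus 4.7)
The plan is to equip $E$ with a canonical strong truncation extracted from the supremum hypothesis, then invoke Theorem \ref{main} to obtain an injective $^{\ast}$-homomorphism $T:E\to C_{0}(\eta E)$ with uniformly dense range, and finally combine Remark \ref{Rq} with the Fatou property to promote $T$ to an isometry for the original norm.

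First, I would define, for each $f\in E^{+}$,
\[
f^{\ast}=\sup\{g\in E:0\leq g\leq f\text{ and }\|g\|\leq 1\},
\]
which exists by assumption and satisfies $\|f^{\ast}\|\leq 1$ by the Fatou property. The main obstacle is verifying the truncation axiom $f^{\ast}\wedge g=f\wedge g^{\ast}$; I propose to handle this via the infinite distributive law in Riesz spaces. Indeed, for any $h$ in the defining set of $f^{\ast}$, the meet $h\wedge g$ lies below both $g$ and $h\leq f$ with $\|h\wedge g\|\leq\|h\|\leq 1$ since $\|\cdot\|$ is a Riesz norm, so $h\wedge g\leq f\wedge g^{\ast}$. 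Taking the supremum over $h$ and using that meeting with $g$ distributes over existing suprema yields $f^{\ast}\wedge g\leq f\wedge g^{\ast}$, and the reverse inequality follows by symmetry.

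This truncation is strong because, for nonzero $f\in E^{+}$, the choice $\varepsilon=1/\|f\|$ places $\varepsilon f$ inside its own defining set and forces $(\varepsilon f)^{\ast}=\varepsilon f$. Moreover, if $\varepsilon|f|\in P_{\ast}(E)$ for every $\varepsilon>0$, then Fatou gives $\|\varepsilon f\|\leq 1$ for every $\varepsilon>0$, whence $\|f\|=0$ and $f=0$; so $E$ has no nonzero $^{\ast}$-infinitely small elements. At this point Theorem \ref{main} applies and produces the desired injective $^{\ast}$-homomorphism $T:E\to C_{0}(\eta E)$ with uniformly dense range.

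It remains to check that $T$ is norm-preserving, for which I would combine Remark \ref{Rq} with the Fatou characterization of $P_{\ast}(E)$. For $f\in E^{+}$, the element $f$ lies in the defining set of $f^{\ast}$ precisely when $\|f\|\leq 1$, while conversely $\|f^{\ast}\|\leq 1$ by Fatou; hence $f\in P_{\ast}(E)$ if and only if $\|f\|\leq 1$. Together with $B_{\infty}=\{f\in E:|f|\in P_{\ast}(E)\}$ from Remark \ref{Rq}, this shows that the unit ball for the pulled-back uniform norm coincides with the original unit ball of $E$, so homogeneity yields $\|T(f)\|_{\infty}=\|f\|$ for every $f\in E$. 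This realizes $E$ as a uniformly dense truncated Riesz subspace of $C_{0}(\eta E)$ in a norm-preserving fashion, as required; the $M$-norm hypothesis is consistent with this identification since the uniform norm on $C_{0}(\eta E)$ is itself an $M$-norm.
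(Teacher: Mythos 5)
Your proposal is correct and follows essentially the same route as the paper: define the truncation by the given supremum, verify the truncation identity via the distributive law (the paper phrases this as the sets $U=\{f\wedge h\}$ and $V=\{g\wedge h\}$ having the same upper bounds), use the Fatou $M$-norm to get strongness and the absence of nonzero $^{\ast}$-infinitely small elements, apply Theorem \ref{main}, and identify the two unit balls through Remark \ref{Rq}. The only (shared, minor) elision is that invoking the Fatou property for $\|f^{\ast}\|\leq 1$ tacitly uses that the defining set is upward directed, which is exactly where the $M$-norm enters.
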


\begin{proof}
Put%
\[
f^{\ast}=\sup\left\{  g\in E:0\leq g\leq f\text{ and }\left\Vert g\right\Vert
\leq1\right\}  \text{, for all }f\in E^{+}.
\]
It turns out that this equality gives rise to a truncation on $E$. Indeed, let
$f,g\in E^{+}$ and put%
\[
U=\left\{  f\wedge h:0\leq h\leq g\text{ and }\left\Vert h\right\Vert
\leq1\right\}
\]
and%
\[
V=\left\{  g\wedge h:0\leq h\leq f\text{ and }\left\Vert h\right\Vert
\leq1\right\}  .
\]
If $k$ is an upper bound of $U$ and $h\in\left[  0,f\right]  $ with
$\left\Vert h\right\Vert \leq1$, then%
\[
0\leq h\wedge g\leq g\text{\quad and\quad}\left\Vert h\wedge g\right\Vert
\leq\left\Vert h\right\Vert \leq1.
\]
Hence,%
\[
k\geq\left(  h\wedge g\right)  \wedge f=\left(  h\wedge f\right)  \wedge
g=h\wedge g.
\]
This means that $k$ is an upper bound of $V$. We derive quickly that $U$ and
$V$ have the same upper bounds, and so the same supremum (which exists in
$E$). Therefore, $f^{\ast}\wedge g=f\wedge g^{\ast}$, meaning that $\ast$ is a
truncation on $E$. We claim that $E$ has no nonzero $^{\ast}$-infinitely small
elements. To this end, take $f\in E^{+}$ with $\left(  \varepsilon f\right)
^{\ast}=\varepsilon f$ for all $\varepsilon\in\left(  0,\infty\right)  $.
Since $E$ has a Fatou $M$-norm, we can write, for $\varepsilon\in\left(
0,\infty\right)  $,%
\[
\varepsilon\left\Vert f\right\Vert =\left\Vert \left(  \varepsilon f\right)
^{\ast}\right\Vert =\sup\left\{  \left\Vert g\right\Vert :0\leq g\leq
\varepsilon f\text{ and }\left\Vert g\right\Vert \leq1\right\}  \leq1,
\]
so $f=0$, as desired. Now, we prove that the truncation $\ast$ is strong. If
$f\in E^{+}$ with $f\neq0$ then%
\begin{align*}
\left(  \frac{f}{\left\Vert f\right\Vert }\right)  ^{\ast}  &  =\sup\left\{
h:0\leq h\leq\frac{f}{\left\Vert f\right\Vert }\text{ and }\left\Vert
h\right\Vert \leq1\right\} \\
&  =\sup\left\{  h:0\leq h\leq\frac{f}{\left\Vert f\right\Vert }\right\}
=\frac{f}{\left\Vert f\right\Vert }.
\end{align*}
This gives the required fact. Consequently, using Theorem \ref{main}, we infer
that $E$ is isomorphic as a truncated Riesz space to a uniformly dense
truncated Riesz subspace of $C_{0}\left(  \eta E\right)  $. In the next lines,
we shall identify $E$ with its isomorphic copy in $C_{0}\left(  \eta E\right)
$. So, it remains to prove that the norm of $E$ coincides with the uniform
norm. To see this, it suffices to show that%
\[
B_{E}=\left\{  f\in E:\left\Vert f\right\Vert \leq1\right\}  =B_{\infty
}=\left\{  f\in E:\left\Vert f\right\Vert _{\infty}\leq1\right\}  .
\]
Pick $f\in B_{E}\ $and observe that%
\[
f=\sup\left\{  g\in E:0\leq g\leq f\text{ and }g\in B_{E}\right\}  =f^{\ast}.
\]
However,%
\[
\left\Vert f^{\ast}\right\Vert _{\infty}=\left\Vert 1\wedge f\right\Vert
_{\infty}\leq1.
\]
It follows that $f\in B_{\infty}$ from which we derive that $B_{\infty}$
contains $B_{E}$. Conversely, if $0\leq f\in B_{\infty}$ then, by Remark
\ref{Rq}, $f\in P_{\ast}\left(  E\right)  $. But then%
\[
f=\sup\left\{  g\in E:0\leq g\leq f\text{ and }g\in B_{E}\right\}  .
\]
This yields easily that $\left\Vert f\right\Vert \leq1$, completing the proof
of the corollary.
\end{proof}

\medskip

\noindent\textbf{Acknowledgment. }This research is supported by Research
Laboratory LATAO Grant LR11ES12.

\end{document}